\renewcommand*{\@fnsymbol}[1]{\ensuremath{\ifcase#1\or *\or ** \or \dagger\or \ddagger\or
   \mathsection\or \mathparagraph\or \|\or **\or \dagger\dagger
   \or \ddagger\ddagger \else\@ctrerr\fi}}
\numberwithin{equation}{section}
\theoremstyle{plain}
\newtheorem{thm}{Theorem}[section]
\newtheorem{lemma}{Lemma}[section]
\newtheorem{corollary}{Corollary}[section]
\newcommand{\e}{{\mathrm e}}
\newcommand{\Q}{{\mathbb Q}}
\newcommand{\Z}{{\mathbb Z}}
\newcommand{\D}{{\mathrm d}}
\newcommand{\M}{{\boldsymbol M}}
\newcommand{\dd}{{\mathrm d}}
\newcommand{\NHPP}{{\sc NHPP}}
\newcommand{\E}{\mathbb{E}}
\newcommand{\PR}{\mathbb P}
\newcommand{\PGF}{{\sc pgf}}
\newcommand{\MCLT}{{\sc mclt}}
\newcommand{\FCLT}{{\sc fclt}}
\newcommand{\Var}{\mathbb{V}\mathrm{ar}}
\newcommand{\Cov}{{\mathbb C}\mathrm{ov}}
\titleformat{\section}{\normalfont\fontsize{12}{12}\bfseries}{\thesection}{1em}{}
\titleformat{\subsection}{\normalfont\fontsize{12}{12}\bfseries}{\thesubsection}{1em}{}
\date{}
\title{{\bf Scaling limits for infinite-server systems\\ in a random environment}}
\author{By Mariska Heemskerk\thanks{University of Amsterdam}, Johan van Leeuwaarden\thanks{Eindhoven University of Technology} \\and Michel Mandjes\footnotemark[1]}
\begin{document}

\maketitle

\vspace*{-35pt}
\begin{abstract}\noindent \textbf{Abstract.}
This paper studies the effect of an overdispersed arrival process on the performance of an infinite-server system.
In our setup, a random environment is modeled by drawing an arrival rate $\Lambda$  from a given  distribution every $\Delta$ time units, yielding an i.i.d.\ sequence of arrival rates $\Lambda_1,\Lambda_2, \ldots$.
Applying a martingale central limit theorem, we obtain a functional central limit theorem for the scaled queue length process.
We proceed to large deviations and derive the logarithmic asymptotics of the queue length's tail probabilities. 
As it turns out, in a rapidly changing environment (i.e., $\Delta$ is small relative to $\Lambda$) the overdispersion of the arrival process hardly affects system behavior, whereas in a slowly changing random environment it is fundamentally different; this general finding applies to both the central limit and the large deviations regime.
We extend our results to the setting where each arrival creates a job in multiple infinite-server queues.
\end{abstract}

\noindent {\footnotesize \textbf{2010 Mathematics Subject Classification.} Primary: 60K25. Secondary: 60F05, 60F10, 60K37, 90B15.}
\noindent {\footnotesize \textbf{Keywords.}\, Scaling limits  $\circ$ overdispersion $\circ$ non-Poisson arrival processes $\circ$ Cox processes  $\circ$ infinite-server queues $\circ$ central limit theorem  $\circ$ large deviations}

\section{Introduction}

Empirical studies show that the number of arrivals in customer contact centers, hospital emergency departments and cloud computing systems typically varies strongly over time \cite{KW14, WGK07}.
This motivates modeling such arrival processes by a \textit{non-homogeneous Poisson process} (\NHPP) with time-dependent arrival rate $\lambda(t)$, see e.g.\ \cite{KW13}.
At the same time, various studies show that in a broad variety of real-life systems the intensity of the fluctuations in the arrival rate is so severe that the Poisson assumption ceases to hold \cite{BRZ10,KW14}.
The observed level of \textit{overdispersion} urges the need to develop stochastic models that can capture such persistent fluctuations. 

Starting from the classical Poisson process, it is common practice to increase dispersion by using a \textit{mixed} Poisson process \cite{BRZ10,M09}, to that end replacing a deterministic parameter $\lambda$ by a random parameter $\Lambda$.
This leads to the idea of modeling overdispersed arrival processes by a mixed version of NHPPs, so-called \textit{Cox processes} \cite{Cox55}, where the time-dependent rate $\lambda(t)$ of the classical NHPP is replaced by a stochastic process $\Lambda(t)$.
For instance, one could use Markov-modulated Poisson processes in which the arrival rate $\Lambda(t)=\lambda_{J(t)}$ is a function of a continuous-time Markov chain $J(\cdot)$ on a finite state space $S$ and non-negative rates $\lambda_i$ for $i \in S$ (see e.g.\ \cite{ABM14, BTK14}). 
To also include, say, diurnal patterns, one could 
work with the arrival rate $\Lambda(t)+\bar\lambda(t)$ for some function $\bar\lambda(t)$.  
Although the Markov-modulated Poisson process is versatile and has various attractive properties, it has considerable drawbacks as well.
First, while a substantial body of results for single-server queues with Markov modulation has been established, considerably less is known about their many-server and infinite-server counterparts; see e.g.\ an account of this issue for the infinite-server system in \cite{BTM15}. 
Second, due to the fact that the process $J(\cdot)$ is not observed, estimating the parameters of a Markov-modulated Poisson process from data is a non-trivial task \cite{RYD96}.

The main objective of this paper is to develop an arrival process simpler than a Markov-modulated Poisson process -- arguably the simplest in terms of analysis -- that fits the overdispersed and time-dependent setting, and to assess the impact of these characteristics on a corresponding system's performance. 
The model we propose is a {\it mixed Poisson} arrival process in a {\it random environment}.
It is defined as follows.
Let $\Lambda$ a non-negative random variable with finite first two moments and density $f_{\Lambda}(\cdot)$.
Introduce a \textit{sampling frequency} $\frac{1}{\Delta}$; then the arrival rate at time $t$ is given by $\Lambda_j$ when $t \in [j\Delta,(j+1)\Delta)$, where the $\Lambda_j$ are independent and distributed as a non-negative random variable $\Lambda$, for $j \in \Z$.
In other words, this arrival process is a special case of a stationary Cox process where the arrival rate at time $t$ is given by 
\begin{equation}\label{deflab}
\Lambda(t)=\sum_j \Lambda_j 1_{[j\Delta,(j+1)\Delta)}(t).
\end{equation}
To add nonstationarity in the arrivals, one could include a deterministic component $\bar\lambda(t)$ without intrinsically complicating the analysis; for ease of presentation we omit the extra component here.
The resulting process can be viewed as an extension of the classical mixed Poisson setting, which is enriched by (independently) resampling the arrival rate after every time slot of length $\Delta>0$. 
The intuition is that the arrival rate changes every $\Delta$ time units, so that the number over a large time slot fluctuates more severely than standard Poisson data would, as can be made explicit via an elementary computation.
Let the number of arrivals up to time $t$ be given by $N_t \sim {\rm Pois}(\int_0^t \Lambda_s\, \dd s)$ and let $t$ be some multiple of $\Delta$ (for simplicity).
Then $\E N_t = t \E \Lambda$, whereas
\begin{align*}
\Var(N_t)=\sum_{j=1}^{t/\Delta} \Var(N_{\Delta})&=t \Delta^{-1}\big(\E [ \Var(N_{\Delta} | \Lambda)] + \Var(\E[N_{\Delta}|\Lambda])\big)\\
&=t\big(\E \Lambda + \Delta \Var(\Lambda)\big).
\end{align*}
Conclude that, as desired, the variance-to-mean ratio is strictly larger than $1$ for non-deterministic $\Lambda$, i.e.,
\[
\frac{\Var(N_t)}{\E N_t}=1 + \Delta \frac{\Var(\Lambda)}{\E \Lambda}.
\] 
Observe that the level of overdispersion is determined by the interval length $\Delta$ and the level of overdispersion in $\Lambda$ (through its variance-to-mean ratio).

Given this model for the arrival process, various queueing models can be studied; in this paper we focus on single-class infinite-server systems with exponential service times. 
The proposed arrival process being overdispersed, the main objective of this paper is to reveal, in a compact manner, the impact of overdispersion on system performance.
Infinite-server systems are a natural choice when the system at hand is designed to (almost) immediately serve all customers \cite{Whitt99}, but it may also serve as a tractable proxy for the more complicated multi-server systems, which is for instance exploited in the modified offered-load (MOL) and pointwise stationary approximation (PSA) methods for staffing large-scale service systems in a time-varying setting \cite{WK12,WGK07}. \\


\noindent \textbf{Contributions.} Infinite-server systems with overdispersed arrivals are, as described above, very tractable.
As shown in Section \ref{sec:OVDIS}, it is fairly straightforward to compute the probability generating function (\PGF{}) of the stationary and time-dependent queue length processes in terms of transforms. 
This is due to the fact that customers are served immediately upon arrival, independently of each other; as a result, when analyzing the queue length at a given point in time, we can separately consider the individual (independent!) contributions that correspond to each of the preceding intervals of length $\Delta$.

The queue length distribution can be characterized in terms of its \PGF{}, which effectively means that evaluation of the accompanying performance measures requires numerical inversion.
However, by imposing a scaling on both the time and scale parameters, $\Delta$ and $\Lambda$, we succeed in identifying an asymptotic regime in which the distribution {\it can} be explicitly given.
We inflate the arrival rate and sampling frequency in the following way:
\begin{align}\label{scale}
\begin{aligned}
\Lambda \mapsto N\Lambda \qquad
\Delta^{-1} \mapsto N^{\alpha}\Delta^{-1},
\end{aligned}
\end{align}
where we let $N\to\infty$.
Importantly, $\Lambda$ and $\Delta^{-1}$ do not necessarily grow at the same rate under scaling (\ref{scale}). 
The value of $\alpha$ determines the asymptotic behavior of the resulting scaled system, giving rise to a {\it trichotomy}.
For $\alpha > 1$, in which case the arrival rate is resampled relatively frequently, we find that the system behaves as a standard infinite-server queue (no overdispersion), whereas for $\alpha < 1$ the overdispersion remains present in the asymptotic regime. 
The case $\alpha = 1$ essentially reflects a superposition of the two distinct types of behavior.

For preparatory purposes, we show in Section \ref{sec:OVDIS} that the centered and normalized stationary queue length is asymptotically normal under the scaling in (\ref{scale}). 
Next, in Section 3 we consider a multidimensional setting with correlated arrivals: an arrival triggers jobs in multiple queues. 
Hence, we work with a coupled system in which $d$ parallel queues are fed by a single arrival process; cf.\ \cite{LP15a,LP15b}.
With ${\boldsymbol U}^{(N)}(\cdot)$ denoting the vector of centered and normalized queue length processes, the asymptotic normality now translates to the corresponding limiting process ${\boldsymbol U}(\cdot)$ being Gaussian: ${\boldsymbol U}(\cdot)$ is a $d$-dimensional process of the Ornstein-Uhlenbeck type with parameters that depend on the scaling regime. 
Following the approach in \cite{ABM14}, we show this by applying a lemma due to Kurtz and a martingale central limit theorem (\MCLT) to a suitable stochastic integral equation.

Subsequently, in Section \ref{sec:LD} we carry out a large deviations analysis to obtain the logarithmic tail asymptotics corresponding to the queue length distribution. 
The crucial observation in this analysis is that rare events can essentially be realized in two ways: (i)~the random arrival rate attains an exceptionally high value, (ii)~the Poisson process generates an unusually large number of arrivals given the (not so rare) value of the random parameter.
Again, the value of $\alpha$ determines what type of tail behavior dominates: for $\alpha<1$ this is effect~(i), for $\alpha>1$ effect~(ii), and for $\alpha=1$ a combination of effects (i) and~(ii).
These findings complement similar results that have been established for an infinite-server system with Markov-modulated input, where it is noted that the slow regime ($\alpha\in(0,1)$) was not covered in that setting \cite{BTK14, TM14}. 
We conclude Section \ref{sec:LD} by pointing out how the large deviations results can be extended to the multidimensional setting.

\section{Overdispersion in an infinite-server context}\label{sec:OVDIS}
In this section we present a stationary and transient analysis of the single-class Markovian infinite-server system in a random environment just introduced. 
A crucial role is played by $\Lambda(t)$, the arrival rate at time $t$ given in (\ref{deflab}).
Remember that we assumed that the arrival rates are i.i.d.\ and distributed as a random variable $\Lambda\geqslant 0$ with finite first two moments and density $f_{\Lambda}(\cdot)$.
The corresponding service times are assumed i.i.d.\ (and in addition independent of the arrival process) exponentially distributed random variables with mean $1/\mu$.

First, in Section \ref{ss2.1}, we analyze the stationary system behavior, in terms of its \PGF{} and the corresponding moments, which
we then extend to the associated transient behavior. 
We then study the stationary behavior in a central limit regime under parameter scaling (\ref{scale}) in Section \ref{ss2.2}.
This exposition serves as an illustration for the reader, and is intended to create intuition as for why the scaled stationary queue length is asymptotically normal and why the three different limiting regimes appear; in addition, in Section \ref{sec:LD} we need a result that is proven along the same lines.
We remark that in Section \ref{sec3} the normality is generalized in several directions: we establish a {\it functional} central limit theorem (\FCLT) for the (scaled) transient process $M^{(N)}(\cdot)$ corresponding to the $d$-dimensional parallel system as defined in the introduction.

\subsection{Pre-limit results} \label{ss2.1}
This subsection presents `pre-limit results'; later we study their counterparts in the limiting regime after imposing a parameter scaling.\\

\noindent \textbf{Transform of stationary queue length.}
Let $M$ be the random variable associated with the stationary number of jobs (also sometimes referred to as `customers') in the system.
Exploiting `thinning' properties, we can identify the \PGF{} $\phi(z):= \E z^M$ of $M$. 

In the sequel we write $p_t:={\rm e}^{-\mu t}$ for the probability that a job present at $kt$ is still present at $(k+1)t$ and $q_t:=(1-{\rm e}^{-\mu t})/(\mu t)$ for the probability that a job arriving at a uniform epoch in $[kt, (k+1)t)$ is still present at $(k+1)t$.  
Denote $\bar p_t:=1-p_t$. 

Note that $M$ can be written as  the sum of $M_0, M_1, M_2,\ldots$, where $M_k$ represents the number of jobs that arrived in $[-(k+1)\Delta, -k\Delta)$ and are still present at time $0$.
Furthermore, observe that these `thinned' random variables $M_k$ are independent. 
A job that arbitrarily arrived in $[-(k+1)\Delta, -k\Delta)$ (i.e., having arrived at a uniform epoch in this interval) is still in the system at time $0$ with probability
\begin{equation*}
\int_0^\Delta\frac{1}{\Delta} {\rm e}^{-\mu(k\Delta+s)}{\rm d}s 
=q_{\Delta} p_\Delta^{k}.
\end{equation*} 
As a consequence, with $r_t:=t q_t$,
\begin{align}\nonumber
\phi_k(z):= {\mathbb E}z^{M_k} &= \sum_{\ell=0}^\infty \int_0^\infty f_\Lambda(\lambda){\rm e}^{-\lambda \Delta}\frac{(\lambda \Delta)^\ell}{\ell!} 
\sum_{m=0}^\ell z^m {\ell \choose m} \big(q_\Delta p_\Delta^{k}\big)^m \big(1-q_\Delta p_\Delta^{k} \big)^{\ell -m} 
{\rm d}\lambda\\ \nonumber
&=\int_0^\infty  \exp\big(-\lambda r_\Delta p_\Delta^{k}(1-z)\big)f_\Lambda(\lambda)\,{\rm d}\lambda\\ 
&=\E\exp\big(-\Lambda_k r_{\Delta}p_{\Delta}^k(1-z)\big). \label{pgfk}
\end{align}
Observe that $\phi_k(z)$ is a \PGF{} of `mixed Poisson' type: conditional on $\Lambda_k=\lambda$ the \PGF{} corresponds with that of a Poisson random variable with mean $\lambda  r_\Delta p_\Delta^k$. 
We conclude that $M_k$ is distributed as a mixed Poisson random variable with random parameter
 \[
\kappa_{k}(\Lambda_{k}):= \Lambda_{k} r_\Delta p_\Delta^k , 
\]
with $\Lambda_k$ the value of the arrival rate in the interval $[-(k+1)\Delta, -k\Delta)$ (note that, in fact, we should have written  $\Lambda_{-(k+1)}$ rather than $\Lambda_k$, but
due to the i.i.d.\ assumption the processes $\{\Lambda(s)\}_{s\geqslant0}$ and $\{\Lambda(-s)\}_{s \geqslant0}$ have the same finite-dimensional distributions).
Therefore, $M$ is mixed Poisson as well and its random parameter is given by
\begin{align}\label{pars}
\sum_{k=0}^\infty \kappa_k(\Lambda_k)=\int_0^{\infty} \Lambda(s) \e^{-\mu s}\, \dd s =:\kappa(\Lambda).
\end{align}
(Note that $\kappa(\cdot)$ is defined as a functional; $\kappa(\Lambda)$ should be interpreted as $\kappa(\Lambda(\cdot))$.)

There is an alternative way to obtain this result.
Indeed, since we observe the system in stationarity,
\begin{align}\label{fp}
\phi(z)
&=\phi(zp_\Delta+\bar p_\Delta)g_{\Lambda,\Delta}(z),
\end{align}
where $g_{\Lambda,t}(z)$ is defined by
\[g_{\Lambda,t}(z):=
\int_0^\infty  \exp\big(-\lambda r_t(1-z)\big)f_\Lambda(\lambda)\,{\rm d}\lambda=\E\exp\big(-\Lambda r_t(1-z)\big).
\]
Applying an iteration argument to (\ref{fp}) yields 
\begin{align}\label{pgf}
\phi(z) = \prod_{k=0}^\infty g_{\Lambda,\Delta}\big(zp_\Delta^k +\bar p_\Delta\sum_{j=0}^{k-1} p_\Delta^j\big)=
 \prod_{k=0}^\infty g_{\Lambda,\Delta}\big(1-(1-z)p_\Delta^k \big).
\end{align}
In the factors $g_{\Lambda,\Delta}\big(1-(1-z)p_\Delta^k \big)$ we recognize the expression for $\phi_k(z)$ as in (\ref{pgfk}).\\

\noindent \textbf{First two moments.} We now evaluate the first two moments of $M$. This is an interesting computation in its own right, but it also provides useful results that can be exploited when considering this system under the central limit scaling (as is done in the next subsection).

Differentiating (\ref{fp}) and letting $z \uparrow 1$, we obtain a fixed-point equation, 
\[\phi'(1) = \phi'(1) {\rm e}^{-\mu \Delta} +g'_{\Lambda,\Delta}(1)= \phi'(1) {\rm e}^{-\mu \Delta} +r_\Delta\,{\mathbb E}\Lambda.\]
Hence ${\mathbb E}M = \phi'(1)=r_\Delta\,{\mathbb E}\Lambda/(1-{\rm e}^{-\mu \Delta})={\mathbb E}\Lambda/\mu.$
This quantity could have been computed more directly as well, using a standard identity for conditional means:
\begin{equation}\label{cm} 
\E M = \sum_{k=0}^\infty \E M_k=\sum_{k=0}^\infty \E\big[{\mathbb E}[M_k\,|\,\Lambda_k]\big].
\end{equation}
Then observe that $(M_k\,|\,\Lambda_k)$ is Poisson, and hence its mean equals its parameter. 
As a result, (\ref{cm}) equals
\[ \E M = \sum_{k=0}^\infty \E[\kappa_{k}(\Lambda_{k})]=\E[\Lambda] r_\Delta \sum_{k=0}^\infty p_\Delta^k = \E\Lambda/\mu.\]
For the variance we use that
\[\phi''(1)=\phi''(1)p_{ \Delta}^2+2\phi'(1)p_{\Delta}g_{\Lambda,\Delta}'(1)+g_{\Lambda,\Delta}''(1),\]
and hence
\[\phi''(1) = \frac{2\phi'(1) \,p_\Delta \,g_{\Lambda,\Delta}'(1)}{1-p_\Delta^2}
+\frac{g_{\Lambda,\Delta}''(1)}{1-p^2_{ \Delta}}=2\frac{(\E\Lambda)^2}{\mu^2}\frac{p_\Delta}{1+p_\Delta}+\frac{\E\Lambda^2}{\mu^2}\frac{1-p_\Delta}{1+p_\Delta}.\]
It thus follows that, after some algebra,
\begin{eqnarray}\nonumber
\Var M&=& \phi''(1)+\phi'(1)-(\phi'(1))^2\\
&=&\E\Lambda / \mu + C\, \Var \Lambda / \mu^2,\label{VAR}
\end{eqnarray}
where $C:=({1-p_\Delta})/({1+p_\Delta})$.

Alternatively, one could use the `law of total variance' to identify $\Var M$:
\begin{equation}\label{taga}
\Var M =\sum_{k=0}^\infty \Var M_k 
=\sum_{k=0}^\infty \E[\Var( M_k \,|\,\Lambda_k)] + \sum_{k=0}^\infty \Var(\E[M_k\,|\,\Lambda_k]). 
\end{equation}
Observe that, because of the `mixed Poisson property', $\E[\Var( M_k \,|\,\Lambda_k)] = \E[\kappa_{k}(\Lambda_{k})]$, and as a result the first term at the right-hand side of (\ref{taga}) equals $\E M.$ 
The second term, which is inherently non-negative, gives rise to `overdispersion', i.e., the effect that the variance of the stationary queue length {\it exceeds} the corresponding mean. 
This is a distinguishing feature compared to the analogous system in which the Poissonian arrival rate is deterministic:  the stationary queue length in an M/M/$\infty$ system is Poisson, and cannot accommodate any overdispersion.
In order to evaluate the second term in the right-hand side of (\ref{taga}), we note that 
\begin{equation}\label{tagb}
\Var(\E(M_k\,|\,\Lambda_k)) =\Var \big(\kappa_{k}(\Lambda_{k})\big)=r_\Delta^2 p_\Delta^{2k}\cdot \Var \Lambda.
\end{equation}
Substituting (\ref{tagb}) in the second term in the right-hand side of (\ref{taga}), we find that ${\mathbb V}{\rm ar}\, M$ equals (\ref{VAR}), as desired. 

Formula (\ref{VAR}) lends itself to a nice interpretation: the term $\E \Lambda/\mu$ is the contribution to the variance that one would have if the arrival rate would have had the {\it deterministic} value $\E \Lambda$, whereas the term $C\, \Var \Lambda / \mu^2$ needs to be added in order to deal with the non-Poisson variability due to the stochasticity of the arrival rate.\\

\noindent \textbf{Transient behavior.}
As the analysis of the transient system behavior strongly resembles its stationary counterpart, we restrict ourselves to a short account of this.
We let the system start empty (for ease of presentation; a non-empty initial condition can be analyzed without any additional difficulty). 
Denote by $M(t)$ the number of jobs present at time $t$. 
Then, for $n$ the smallest integer such that $t-n\Delta < \Delta$,
\[M(t) = \sum_{j=0}^{n-1} \bar M_j + \bar{M}_{[n\Delta,t)},\]
where $\bar M_j$ ($\bar{M}_{[n\Delta,t)}$) represents the number of jobs that have arrived between in $[j\Delta,(j+1)\Delta)$ ($[n\Delta,t)$) and are still around at  $n\Delta$ ($t$).
As before, these have \PGF{}s
\begin{eqnarray*}
{\mathbb E}z^{\bar M_j}&=& \E \exp\big(-\Lambda r_{\Delta} p_\Delta^{n-(j+1)}\e^{-\mu(t-n\Delta)}(1-z)\big);\\
{\mathbb E}z^{\bar{M}_{[n\Delta,t)}}&=& \E \exp\big(-\Lambda/\mu (1-\e^{-\mu(t-n\Delta)})(1-z)\big).
\end{eqnarray*}
As the individual random variables $\bar M_1, \bar M_2, \dots $ and $\bar{M}_{[n\Delta,t)}$ are independent, $M(t)$ is mixed Poisson with random parameter 
\begin{align}\label{part}
\kappa_{t}(\Lambda):=\int_0^{t} \Lambda(s) \e^{- \mu s} \, \dd s.
\end{align}
\subsection{Limit results}\label{ss2.2}

This section focuses on the central limit regime that results from simultaneously scaling, in a controlled way, both the arrival rate $\Lambda$ and the sampling frequency $\frac{1}{\Delta}$ as in (\ref{scale}).
Let the scaled counterpart of $\Lambda(t)$ be $N \Lambda^{(N)}(t)$, with
\begin{align} \label{defslab} 
\Lambda^{(N)}(t) := \sum_{j=0}^{\infty} \Lambda_j 1_{[j \Delta N^{-\alpha}, (j+1)\Delta N^{-\alpha})}(t).
\end{align}
That is, the sampling frequency and the arrival rates are both inflated as we let $N$ tend to $\infty$, but, importantly,  at rates that are not necessarily identical. 
As mentioned in the introduction, depending on the value of $\alpha$, we obtain fundamentally different behavior.

We consider a sequence of systems indexed by $N$, where the $N$-scaled system uses a mixed Poisson arrival process with time-dependent random rate $N\Lambda^{(N)}(t)$.
Let $M^{(N)}$ denote the stationary queue length in the $N$-scaled system, with parameter $N\kappa(\Lambda^{(N)})$ (cf. (\ref{pars})).
We start our exposition by a preliminary calculation, in which we compute the mean and variance of $M^{(N)}$ and study their behavior for large $N$, which indeed reveals the announced trichotomy. 
Then, after centering and normalizing $M^{(N)}$, we derive a central limit theorem. \\
  
\noindent \textbf{Qualitative behavior of first two moments: trichotomy in variance.}
First, we identify the steady-state mean and variance in our scaling regime, using (\ref{cm}), (\ref{taga}) and (\ref{tagb}). 
We find that
\begin{align}
\label{exp} \E M^{(N)} &= N\E\Lambda / \mu;\\
\label{var} \Var M^{(N)} &= N \E\Lambda / \mu + N^2\frac{1-{\rm e}^{-\mu\Delta N^{-\alpha}}}{1+{\rm e}^{-\mu\Delta N^{-\alpha}}}\Var\Lambda / \mu^2,
\end{align}
where it is noted that for large $N$, (\ref{var}) behaves approximately as
\[
N \E\Lambda / \mu + N^{2-\alpha} \Delta \Var \Lambda/(2\mu)
\]
(the ratio of the two converges to $1$).
We thus observe the trichotomy
\begin{align} \label{dich}
{\mathbb V}{\rm ar}\, M^{(N)} \sim 
\begin{cases} 
N{\displaystyle {\mathbb E}\Lambda / \mu }&\text{ if }\alpha>1;\\
N^{2-\alpha} \Delta{\mathbb V}{\rm ar}\,\Lambda /(2\mu) &\text{ if }\alpha<1;\\
N \big({\mathbb E}\Lambda /\mu + \Delta{\mathbb V}{\rm ar}\,\Lambda / (2\mu)\big)&\text{ if }\alpha=1.
\end{cases}
\end{align}
For $\alpha>1$, the sampling frequency dominates the variability of $\Lambda$. 
Consequently, the model behaves essentially as an M/M/$\infty$ system, with the variance of $M^{(N)}$ being linear in $N$ and equal to $\E M^{(N)}$, for large $N$.
For $\alpha<1$, we find a superlinear relation between $N$ and ${\mathbb V}{\rm ar}\, \Lambda$, and both the sampling frequency (i.e., the reciprocal of the interval length $\Delta$) and the variance of $\Lambda$ play a role.
Hence, the asymptotic variance indeed grows faster than the asymptotic mean for $\alpha <1$; in this regime the system is overdispersed.
For $\alpha=1$, the variance is `slightly larger' than for $\alpha>1$, but it is still linear in $N$.
In this case the sampling frequency and the variance of $\Lambda$ grow at the same rate, so that the variance for $M^{(N)}$ combines the effects observed in the two former cases.

As observed from the above computation, the variance of $M^{(N)}$ is essentially proportional to $N^\gamma$ with $\gamma:=\max\{1,2-\alpha\}$.
As a consequence, one may expect that, under (\ref{scale}),
\begin{equation}\label{clt}
\check M^{(N)}:=
N^{-\gamma/2}(M^{(N) }- {\mathbb E}\,M^{(N)})
\end{equation}
converges to a (zero-mean) normally distributed random variable. 
It is this property that we verify now.\\

\noindent \textbf{Asymptotic normality.}
We show how to establish asymptotic normality for the centered and normalized version of $M^{(N)}$ in (\ref{clt}) via evaluation of the corresponding Laplace transform.
Appealing to L\'evy's convergence theorem, we establish the desired convergence in distribution.
For simplicity,  the proof of Thm.\ \ref{CLT} assumes that all moments of $\Lambda$ are finite; however, as will appear from the proof of Thm.\ \ref{FCLT} only finiteness of the first two moments is necessary.

\begin{thm}[{\sc clt}] \label{CLT}
As $N \rightarrow \infty$, $\check M^{(N)}$ converges to a zero-mean normally distributed random variable with variance \[\sigma^2:=\frac{\E \Lambda}{\mu}\,1_{\{\alpha \geqslant  1\}} + \frac{\Delta{\mathbb V}{\rm ar}\,\Lambda}{2\mu}\,1_{\{\alpha\le 1\}}.\]
\end{thm}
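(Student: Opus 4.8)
The plan is to compute the Laplace transform of $\check M^{(N)}$ explicitly, expand its logarithm in powers of $N^{-1}$, and recognise the limit as the transform of a centred Gaussian; convergence in distribution then follows from L\'evy's continuity theorem. First I would exploit the mixed-Poisson decomposition of Section~\ref{ss2.1}. Writing $\Delta_N := \Delta N^{-\alpha}$, $p_{\Delta_N}:=\e^{-\mu\Delta_N}$ and $r_{\Delta_N}:=(1-p_{\Delta_N})/\mu$, the scaled queue length is $M^{(N)}=\sum_{k=0}^\infty M_k^{(N)}$ with the $M_k^{(N)}$ independent and, conditionally on $\Lambda_k$, Poisson with parameter $N\Lambda_k r_{\Delta_N}p_{\Delta_N}^k$ (this is (\ref{pgfk}) at the scaled parameters). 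With $L_\Lambda(u):=\E\e^{-u\Lambda}$, independence gives, for $\theta\geqslant0$,
\[
\E\exp\!\big(-\theta\check M^{(N)}\big)=\exp\!\big(\theta N^{-\gamma/2}\E M^{(N)}\big)\prod_{k=0}^\infty L_\Lambda\!\big(u_k^{(N)}\big),\qquad u_k^{(N)}:=N r_{\Delta_N}p_{\Delta_N}^k\big(1-\e^{-\theta N^{-\gamma/2}}\big),
\]
so that taking logarithms reduces the problem to the single series $\sum_k\log L_\Lambda(u_k^{(N)})$ together with the centring term.

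Next I would Taylor-expand. Since all moments of $\Lambda$ are finite, the cumulant expansion $\log L_\Lambda(u)=-\E\Lambda\,u+\tfrac12\Var\Lambda\,u^2+O(u^3)$ holds for small $u$, and the arguments $u_k^{(N)}$ are uniformly small for large $N$. Inserting this and summing the geometric series, the linear term collapses through the exact identity $r_{\Delta_N}\sum_k p_{\Delta_N}^k=r_{\Delta_N}/(1-p_{\Delta_N})=1/\mu$, while the quadratic term uses $r_{\Delta_N}^2\sum_k p_{\Delta_N}^{2k}=(1-p_{\Delta_N})/\big(\mu^2(1+p_{\Delta_N})\big)\sim\Delta_N/(2\mu)$ (precisely the constant $C/\mu^2$ from (\ref{VAR})). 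The key bookkeeping is then to expand $1-\e^{-\theta N^{-\gamma/2}}=\theta N^{-\gamma/2}-\tfrac12\theta^2N^{-\gamma}+O(N^{-3\gamma/2})$ and match powers of $N$.

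The delicate step, and the main obstacle, is the exact cancellation that produces the Gaussian variance out of subleading terms. The centring exponent $\theta N^{-\gamma/2}\E M^{(N)}=\theta\E\Lambda\,N^{1-\gamma/2}/\mu$ must cancel the leading $O(N^{1-\gamma/2})$ part of the linear term precisely; what survives is the second-order piece $\tfrac12\theta^2(\E\Lambda/\mu)N^{1-\gamma}$, which persists in the limit iff $1-\gamma=0$, i.e.\ iff $\alpha\geqslant1$. Independently, the quadratic term contributes $\tfrac14\theta^2\Delta\Var\Lambda\,\mu^{-1}N^{2-\alpha-\gamma}$, which persists iff $2-\alpha-\gamma=0$, i.e.\ iff $\alpha\leqslant1$. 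Tracking these two exponents across the three cases $\alpha\gtrless1$ reproduces exactly the indicator structure of $\sigma^2$.

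To close the argument I would bound the remainder $\sum_k(u_k^{(N)})^3=O(N^{3-2\alpha-3\gamma/2})\to0$ (the exponent is negative in every regime for $\alpha>0$), and likewise all higher-order terms, so that
\[
\lim_{N\to\infty}\E\exp\!\big(-\theta\check M^{(N)}\big)=\exp\!\Big(\tfrac12\theta^2\sigma^2\Big),\qquad \sigma^2=\frac{\E\Lambda}{\mu}\,1_{\{\alpha\geqslant1\}}+\frac{\Delta\Var\Lambda}{2\mu}\,1_{\{\alpha\leqslant1\}}.
\]
This is the transform of a zero-mean normal law with variance $\sigma^2$; the same computation with $\theta$ replaced by $-\mathrm i\theta$ identifies the limiting characteristic function, and L\'evy's continuity theorem yields the asserted convergence in distribution. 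Beyond the power-counting, the only genuine work is checking that the geometric sums and the remainder estimates are uniform enough to justify expanding and summing termwise, which is routine given the exponential decay in $k$.
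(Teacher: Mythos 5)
Your proposal is correct and follows essentially the same route as the paper's proof: the mixed-Poisson product form of the Laplace transform, a cumulant expansion of $\log L_\Lambda$ with the linear term cancelling the centring, power counting in $N$ to isolate the $\ell=1$ and $\ell=2$ contributions (yielding the indicator structure of $\sigma^2$), a bound showing the third- and higher-order terms vanish, and L\'evy's continuity theorem. The only cosmetic difference is that you truncate the cumulant expansion at second order with an $O(u^3)$ remainder rather than writing out the full cumulant series as the paper does; the power-counting exponent $3-2\alpha-3\gamma/2$ you obtain coincides with the paper's $\delta(\alpha)$ at $\ell=3$.
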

\begin{proof}
Let $\phi^{(N)}(z)$ be the counterpart of (\ref{pgf}) under scaling as in (\ref{scale}); likewise $g^{(N)}_{\Lambda,\Delta}(z)$ is the counterpart of $g_{\Lambda,\Delta}(z)$. 
Then
\[
\phi^{(N)}(z) = \prod_{k=0}^\infty g^{(N)}_{\Lambda,\Delta}\big(1-(1-z)\e^{-\mu kN^{-\alpha}\Delta}\big).
\]
We are interested in the behavior of $M^{(N)}$ in the central limit regime, hence we need to analyze the limiting distribution 
of $\check M^{(N)}$. 
To this end, we evaluate the logarithm of the corresponding Laplace transform:
\begin{equation}\label{c2}
\log{\mathbb E}\exp\big(-s N^{-\gamma/2}(M^{(N)} - \E M^{(N)})\big)=s N^{1-\gamma/2} \E\Lambda / \mu+\log\phi^{(N)}\big(\e^{-s N^{-\gamma/2}}\big).
\end{equation}
We now use that $\log\phi^{(N)}(\e^{-s N^{-\gamma/2}})$ equals
\begin{align}\label{cumul}
\sum_{k=0}^\infty \log{\mathbb E}\e^{- N\Lambda/\mu\big(1-{\rm e}^{-\mu N^{-\alpha}\Delta}\big)\big(1-{\rm e}^{-s N^{-\gamma/2}}\big){\rm e}^{-\mu k N^{-\alpha}\Delta}}.
\end{align}
Observe that (\ref{cumul}) is the sum of cumulant generating functions (which is again a cumulant generating function), each of them related to the random variable $\Lambda$ but evaluated at different arguments.
Let $m_\ell$ denote the $\ell$-th cumulant of $\Lambda$ (for $\ell\in{\mathbb N}$); in particular $m_1={\mathbb E}\Lambda$ and $m_2 ={\mathbb V}{\rm ar}\, \Lambda$.
In addition, we define
\[\zeta_k^{(N)}(s):= -N / \mu\big(1-{\rm e}^{-\mu N^{-\alpha}\Delta}\big)\big(1-{\rm e}^{-s N^{-\gamma/2}}\big){\rm e}^{-\mu k N^{-\alpha}\Delta}.\]
Then it follows that
\begin{align} \label{cumsum}
\log\phi^{(N)}\big({\rm e}^{-s N^{-\gamma/2}}\big) = \sum_{\ell =1}^\infty \frac{m_\ell}{\ell!}\sum_{k=0}^\infty    \big(\zeta_k^{(N)}(s)\big)^\ell.
\end{align}
Let us first consider the contribution of the term corresponding to $\ell =1$. 
Observe that, as $N \to \infty$,
\begin{align}
m_1\,\sum_{k=0}^\infty    \zeta_k^{(N)}(s) = - N\,{\mathbb E}\Lambda / \mu\big(1-{e}^{-s N^{-\gamma/2}}\big)\sim 
-s N^{1-\gamma/2} {\mathbb E}\Lambda / \mu
+\frac12 s^2 N^{1-\gamma }{\mathbb E}\Lambda / \mu.\label{c1}
\end{align} Note that the first term in the right-hand side of (\ref{c1}) is canceled by the first term in the right-hand side of (\ref{c2}), so that we are left with the second term, i.e.,
\begin{equation}\label{c11}
\frac12 s^2 N^{1-\gamma }{\mathbb E}\Lambda / \mu.
\end{equation} 
The second term in (\ref{cumsum}) corresponding to $\ell =2$ gives
\begin{eqnarray}\label{c3}
\qquad \frac{1-\e^{-\mu N^{-\alpha}\Delta}}{1+\e^{-\mu N^{-\alpha}\Delta}}\frac{(1-\e^{-s N^{-\gamma/2}})^2}{2\mu^2} N^2\,\Var\Lambda \sim
\frac12 s^2 N^{2-\alpha-\gamma}\, \Delta\,\Var\Lambda / (2\mu).
\end{eqnarray}
Now compare the asymptotic expansion identified in (\ref{c11}) and (\ref{c3}).
In case $\alpha >1$, we have that $\gamma=1$, so that (\ref{c11}) equals $\frac{1}{2}\,s^2\, {\E\Lambda}/\mu$, whereas (\ref{c3}) converges to zero. 
On the other hand, for $\alpha<1$ we have $\gamma=2-\alpha$, and hence (\ref{c11}) converges to zero, whereas (\ref{c3})  behaves as $\frac{1}{2}s^2{\Delta\,\Var\Lambda}/(2\mu)$. 
Finally, if $\alpha=1$, we find that both terms converge to the expected finite positive limit.

We now check that the terms in (\ref{cumsum}) for $\ell\geqslant 3$ vanish as $N \to \infty$.
For large $N$ the terms can be approximated as follows,
\[\sum_{k=0}^\infty    \big(\zeta_k^{(N)}(s)\big)^\ell\sim N^\ell \frac{\big(1-{e}^{-\mu N^{-\alpha}\Delta}\big)^\ell}{1-{e}^{-\ell\mu N^{-\alpha}\Delta}}\big(1-{e}^{-s N^{-\gamma/2}}\big)^\ell  \sim N^\ell \,  \frac{\mu^\ell N^{-\alpha\ell}{\Delta^\ell}}{\ell\mu N^{-\alpha}\Delta}\frac{s^\ell}{N^{\gamma\ell/2}},\]
hence being of order $N^\delta$ with $\delta=\delta(\alpha):=\ell(1-\alpha-\gamma/2)+\alpha$. 
In case $\alpha\geqslant 1$, we get (bearing in mind that $\gamma=1$ and $\ell \geqslant 1$)
\[\delta=\ell(\frac12 -\alpha) + \alpha=\frac12 \ell + \alpha (1-\ell)\leqslant\frac12 \ell + 1- \ell=1- \frac{\ell}{2};\] 
on the other hand, in case $\alpha < 1$ we get $\delta = -\ell\alpha/2 +\alpha=\alpha (1-{\ell}/{2})$ (with $\gamma=2-\alpha$).
We conclude that $\delta< 0$ for $\ell\geqslant 3$ and the corresponding terms in (\ref{cumsum}) can indeed be neglected.
We have therefore established that, as $N\to\infty$,
\begin{align*}
\log \E\exp\big(-s N^{-\gamma/2}(M^{(N)} - {\mathbb E} M^{(N)}) \big) \to \frac{1}{2}\sigma^2 \,s^2,
\end{align*}
as claimed. \end{proof}

\section{Functional central limit theorem} \label{sec3}
In this section we generalize the central limit result of Thm.\ \ref{CLT} in two ways. 
First, we establish the functional version: the centered and normalized transient queue length process converges to a limiting process of Ornstein-Uhlenbeck type with parameters that depend on the value of $\alpha$. 
Second, we extend this to a multidimensional setting with correlated arrivals: every arrival triggers jobs in multiple queues. 
The correlation structure of the resulting multidimensional Gaussian limiting process is explicitly identified. 

Let us start by describing the mechanics of the generalized setting. 
We consider a parallel system in which $d$ queues are fed by a {\it single} arrival process that was constructed in the same way as the one in the previous section: a Markovian process with arrival rate $\Lambda(t)$ as in (\ref{deflab}).
The service times in queue $i$ are i.i.d.\ exponential random variables with mean $\mu_i^{-1}$; the service processes  of the individual queues are independent, and also independent of the arrival process.
We perform the same scaling as before: the sampling frequency is sped up by a factor $N^\alpha$, while the (random) arrival rate is blown up by a factor $N$.
This results in a mixed Poisson arrival process with time-dependent rate $\Lambda^{(N)}(t)$ as in (\ref{defslab}).
Let \[\M^{(N)}(t)=(M^{(N)}_1(t), \dots, M^{(N)}_d(t))^{\rm T},\] where $M^{(N)}_i(t)$ is the queue length at time $t$ in the $i$-th queue of the $N$-scaled system, for $i\in \{1,\dots,d\}$.
Note that the $M^{(N)}_i(t)$ are mixed Poisson with time-dependent random parameter $N \kappa_{t,i}(\Lambda^{(N)})$, with $\kappa_{t,i}(\Lambda^{(N)})$ as defined in (\ref{part}) but with $\mu$ replaced by $\mu_i$.

We now present an alternative way of writing $M^{(N)}_i(t)$, which facilitates the use of a martingale central limit theorem. 
We introduce the functional
\[\Psi[X](t):=\int_0^t X(s)\, \mathrm{d}s,\]
mapping the stochastic process $\{X(s): s\in[0,t]\}$ to a real number; then
$\mu_i\,\Psi[M_i^{(N)}](t)$ is to be interpreted as the `cumulative service capacity' in queue $i$ over the interval $[0,t]$.
In addition, for the `cumulative arrival rate' we have $\Psi[\Lambda](t)$,
with scaled counterpart $N \Psi[\Lambda^{(N)}](t)$.
By the law of large numbers, $\Psi[\Lambda](t)/t$ converges a.s.\ to $\E \Lambda$ as $t \to \infty$ and for fixed $t$, $\Psi[\Lambda^{(N)}](t)$ converges a.s.\ to $t\, \E \Lambda$ as $N \to \infty$.

With $Y_0(\cdot),\ldots,Y_d(\cdot)$ denoting independent unit-rate Poisson processes,
\begin{align} \label{ql}
M_i^{(N)}(t)\overset{\mathrm{d}}{=} M_i^{(N)}(0)+ Y_0\big(N \Psi[\Lambda^{(N)}](t)\big)-Y_i\big(\mu_i\,\Psi[M_i^{(N)}](t)\big).\end{align}
Our objective is to derive a $d$-dimensional \FCLT{} for $\M^{(N)}(\cdot)$. 
This result characterizes the time-dependent queue length in the scaled system and makes explicit how the correlated arrivals lead to correlation between the individual queue length processes.
It will be stated and proved in subsection \ref{3.2}; first we study the stationary behavior by presenting the corresponding first two moments of $\M^{(N)}$ (including covariances between the individual queues). 

\subsection{Qualitative behavior of first two moments in stationarity}

Note that the individual queue lengths are only coupled through the arrival process, so under  (\ref{scale}), the mean and variance of $\M^{(N)}$ are, as in (\ref{exp}) and (\ref{var}), given by
\begin{align*}
\E M_i^{(N)} &= N \E \Lambda / \mu_i,\\
\Var M^{(N)}_i&=N \E \Lambda / \mu_i+N^2 \Var \Lambda / \mu_i^2 \frac{1-p_i(\Delta N^{-\alpha})}{1+p_i(\Delta N^{-\alpha})} \sim N \E\Lambda / \mu_i + N^{2-\alpha} \Delta \Var\Lambda / (2\mu_i),
\end{align*}
for $i = 1,\dots,d$.
Hence, we find the same behavior as in (\ref{dich}). 
Interestingly, the same trichotomy is observed for the covariances, as stated in the next lemma.
\begin{lemma}[Covariance in $\M^{(N)}$]\label{lem:gedrag}
For $i,k\in \{1, \dots, d\}$ with $i\neq k$, and for large $N$,
\begin{equation}\label{gedrag}
\Cov(M^{(N)}_i, M^{(N)}_k)\sim \begin{cases}
N \E\Lambda /(\mu_i+\mu_k)&\text{ if }\alpha>1;\\
N^{2-\alpha} \Delta\Var(\Lambda) / (\mu_i+\mu_k)&\text{ if }\alpha<1;\\
 N\big(\E\Lambda /(\mu_i+\mu_k)+\Delta\Var(\Lambda) / (\mu_i+\mu_k)\big)
&\text{ if }\alpha=1.
\end{cases}\end{equation}
\end{lemma}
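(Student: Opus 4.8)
The plan is to mimic the law-of-total-variance computation of (\ref{taga})--(\ref{tagb}), now in its bilinear (covariance) form, exploiting that queues $i$ and $k$ are coupled solely through the shared arrival process. First I would decompose each stationary queue length into independent per-slot contributions: writing $M_{i,j}$ for the number of jobs that arrived in the interval $[-(j+1)\Delta N^{-\alpha}, -j\Delta N^{-\alpha})$ and are still present in queue $i$ at time $0$, we have $M_i^{(N)} = \sum_{j\geqslant 0} M_{i,j}$. Because the rates $\Lambda_j$ are i.i.d.\ and, given the arrival epochs, the service processes across slots and across queues are independent, the pairs $(M_{i,j}, M_{k,j})$ and $(M_{i,j'}, M_{k,j'})$ are independent for $j\neq j'$; hence $\Cov(M_i^{(N)}, M_k^{(N)}) = \sum_{j\geqslant 0}\Cov(M_{i,j}, M_{k,j})$ and it suffices to treat a single slot.

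For a fixed slot I would condition on $\Lambda_j$ and apply the law of total covariance, $\Cov(M_{i,j}, M_{k,j}) = \E[\Cov(M_{i,j}, M_{k,j}\,|\,\Lambda_j)] + \Cov(\E[M_{i,j}\,|\,\Lambda_j], \E[M_{k,j}\,|\,\Lambda_j])$, paralleling (\ref{taga}). The conditional means are linear in the rate: with $p_i := \e^{-\mu_i\Delta N^{-\alpha}}$ one has $\E[M_{i,j}\,|\,\Lambda_j] = N\Lambda_j\int_{j\Delta N^{-\alpha}}^{(j+1)\Delta N^{-\alpha}}\e^{-\mu_i s}\,\dd s = (N\Lambda_j/\mu_i)(1-p_i)p_i^{\,j}$. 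Thus the second term equals $N^2\,\Var\Lambda\,(1-p_i)(1-p_k)(p_ip_k)^{\,j}/(\mu_i\mu_k)$, and summing the geometric series over $j\geqslant 0$ gives $N^2\,\Var\Lambda\,(1-p_i)(1-p_k)/(\mu_i\mu_k(1-p_ip_k))$.

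The main point---and the step I expect to be the crux---is the conditional covariance term. Given $\Lambda_j=\lambda$ the arrivals in the slot form a Poisson process of rate $N\lambda$, and each arrival, independently, is still in queue $i$ at time $0$ (probability $\e^{-\mu_i s}$ at displacement $s$) and in queue $k$ at time $0$ (probability $\e^{-\mu_k s}$), these two survival events being independent since the service processes of the two queues are independent. Splitting each arrival into the four categories ``present in both / only $i$ / only $k$ / neither'' is a Poisson thinning, so the four category counts are independent Poisson variables; writing $M_{i,j}$ and $M_{k,j}$ as the respective category sums, all cross-covariances vanish and $\Cov(M_{i,j}, M_{k,j}\,|\,\Lambda_j)$ equals the variance---equivalently, by the Poisson property, the mean---of the ``present in both'' count. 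Since joint survival from displacement $s$ has probability $\e^{-(\mu_i+\mu_k)s}$, this mean is $N\lambda\int_{j\Delta N^{-\alpha}}^{(j+1)\Delta N^{-\alpha}}\e^{-(\mu_i+\mu_k)s}\,\dd s$; taking expectations and summing over the slots, which tile $(-\infty,0]$, yields $N\E\Lambda\int_0^\infty\e^{-(\mu_i+\mu_k)s}\,\dd s = N\E\Lambda/(\mu_i+\mu_k)$.

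It remains to combine the two contributions and expand as $N\to\infty$. Using $\Delta N^{-\alpha}\to 0$, so that $1-p_i\sim\mu_i\Delta N^{-\alpha}$ and $1-p_ip_k\sim(\mu_i+\mu_k)\Delta N^{-\alpha}$, the second (overdispersion) term behaves as $N^{2-\alpha}\Delta\Var\Lambda/(\mu_i+\mu_k)$. Hence $\Cov(M_i^{(N)}, M_k^{(N)}) \sim N\E\Lambda/(\mu_i+\mu_k) + N^{2-\alpha}\Delta\Var\Lambda/(\mu_i+\mu_k)$, and comparing the exponents $1$ and $2-\alpha$ of the two terms produces the trichotomy (\ref{gedrag}) exactly as in (\ref{dich}): the first term dominates for $\alpha>1$, the second for $\alpha<1$, and both are of order $N$ for $\alpha=1$.
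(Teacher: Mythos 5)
Your proposal is correct, and it reaches the exact same pre-limit expression as the paper (your two terms sum to $N\E\Lambda/(\mu_i+\mu_k)+N^{2-\alpha}\Delta\Var\Lambda/(\mu_i+\mu_k)$, which is what the paper's closed form reduces to after expanding $1-\e^{-(\mu_i+\mu_k)\Delta N^{-\alpha}}$), but the route differs in how the covariance is extracted. Both arguments rest on the same two structural facts: independence of the per-slot contributions, and the four-fold thinning of each arrival into ``present in both / only $i$ / only $k$ / neither'' (the paper's $\zeta^{++}_{jn},\zeta^{+-}_{jn},\zeta^{-+}_{jn},\zeta^{--}_{jn}$ are precisely your four category probabilities). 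The paper, however, packages these into the per-slot joint \PGF{} $\xi_{jn}(z_1,z_2)$, imposes the scaling, and obtains the covariance by differentiating at $z_1=z_2=1$; you instead apply the law of total covariance per slot and invoke the independence of the thinned Poisson counts, so that the conditional covariance collapses to the mean of the ``present in both'' count. Your argument is the more elementary and arguably cleaner one: it avoids transform differentiation and the small-$\Delta$ approximations the paper carries inside its proof, it stays exact until the final asymptotic step, and it is the natural bilinear extension of the paper's own law-of-total-variance computation (\ref{taga})--(\ref{tagb}) for the single queue. What the paper's transform route buys is the joint generating function itself as an intermediate object: Section \ref{sec:LD} reuses exactly those intermediate \PGF{} computations from the proof of Lemma \ref{lem:gedrag} to identify the limiting log moment generating function in the multivariate large-deviations analysis, which your covariance-only argument would not provide.
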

\begin{proof}
Without loss of generality, we take $i=1$ and $k=2$.
We first consider the non-scaled model, by studying the joint probability generating function,
\[
\E z_1^{M_1(n\Delta)}z_2^{M_2(n\Delta)}=\prod_{j=0}^{n-1} \xi_{jn}(z_1,z_2),
\]
where $\xi_{jn}(z_1,z_2)$ is the contribution due to the slot between $j\Delta$ and $(j+1)\Delta$; as $z_1$ and $z_2$ are held fixed for the moment, we suppress them.
Now we introduce functions (for $\ell=1,2$) 
\[
f_\ell(r,n):=\e^{-\mu_\ell(n\Delta-r)},\:\:\:
g_j(\mu,n):=\tfrac{1}{\mu \Delta}(1-\e^{-\mu\Delta})\, \e^{-\mu(n-j)\Delta},
\]
where it is noted that $g_j(\mu,n)$ behaves as $\e^{-\mu(n-j)\Delta}$ for small $\Delta$.
In addition, we define the quantities
\begin{align*} 
\zeta^{++}_{jn}&:= \int_{j\Delta}^{(j+1)\Delta} \tfrac{1}{\Delta} f_1(r,n)f_2(r,n){\rm d}r=g_j(\mu_1+\mu_2,n),\\
\zeta^{+-}_{jn}&:=\int^{(j+1)\Delta}_{j\Delta} \tfrac{1}{\Delta} f_1(r,n)(1-f_2(r,n)){\rm d}r=g_j(\mu_1,n)-g_j(\mu_1+\mu_2,n),\\
\zeta^{-+}_{jn}&:=\int^{(j+1)\Delta}_{j\Delta} \tfrac{1}{\Delta}(1- f_1(r,n))f_2(r,n){\rm d}r=g_j(\mu_2,n)-g_j(\mu_1+\mu_2,n),\\
\zeta^{--}_{jn}&:=\int^{(j+1)\Delta}_{j\Delta} \tfrac{1}{\Delta} (1-f_1(r,n))(1-f_2(r,n)){\rm d}r=1-g_j(\mu_1,n)-g_j(\mu_2,n)+g_j(\mu_1+\mu_2,n).
\end{align*}
Using arguments similar to those we have used before,
\begin{eqnarray*}
\xi_{jn}&:=&{\mathbb E}\big(\sum_{m=0}^\infty \e^{-\lambda\Delta}\frac{(\lambda\Delta)^m}{m!}\big(\zeta^{++}_{jn}z_1z_2+\zeta^{+-}_{jn}{z_1}+\zeta^{-+}_{jn}z_2+\zeta^{--}_{jn} \big)^m\big)\\
&=&{\mathbb E}\exp\big(\Lambda\Delta\big(\zeta^{++}_{jn}z_1z_2+\zeta^{+-}_{jn}z_1+\zeta^{-+}_{jn}z_2+\zeta^{--}_{jn}-1\big)\big)\\
&\sim&{\mathbb E}\exp\big(\Lambda\Delta\big(\prod_{i=1}^2 \big((z_i-1){\e}^{-\mu_i (n-j) \Delta}+1\big)-1\big)\big) \text{ for small }\Delta.
\end{eqnarray*}
Because the contributions to $M_1(n\Delta)$ and $M_2(n\Delta)$ resulting from different time intervals are independent, we obtain that 
\begin{align*}
\Cov\big(M_1(n\Delta),M_2(n\Delta) \big)
=\sum_{j=0}^{n-1}\left. \big( \frac{\partial^2}{\partial z_1\partial z_2}
\xi_{jn}(z_1,z_2) -\frac{\partial}{\partial z_1}
\xi_{jn}(z_1,z_2) \frac{\partial}{\partial z_2}
\xi_{jn}(z_1,z_2)\big)\right|_{z_1\uparrow 1,z_2\uparrow 1} .
\end{align*}
Now imposing scaling (\ref{scale}) and considering the stationary behavior by letting $n\to\infty$, it is readily derived that (for large $N$)
\[
\E \,z_1^{M^{(N)}_1}z_2^{ M^{(N)}_2}\sim \prod_{j=0}^\infty \E\exp\big(\Lambda \Delta N^{1-\alpha}\big(\prod_{\ell=1}^2\big((z_{\ell}-1)e^{-\mu_{\ell} j\Delta/N^\alpha}+1\big)-1\big)\big);
\]
observe that, for reasons of symmetry, it is allowed to replace $n-j$ by $j$ in the definition of the $g_j(\mu,n)$. 
We thus arrive at
\begin{align*}
\Cov&\big(M_1^{(N)},M_2^{(N)} \big)\\
&\sim \sum_{j=0}^\infty \big( (\E\Lambda\, \Delta N^{1-\alpha} + {\mathbb E}[\Lambda^2]\Delta^2 N^{2-2\alpha}  )\e^{-(\mu_1+\mu_2)j\Delta/N^\alpha}- \prod_{\ell=1}^2{\mathbb E}\Lambda\,\Delta N^{1-\alpha} \e^{-\mu_\ell j\Delta/N^\alpha}\big)\\
&=\frac{{\mathbb E}\Lambda\, \Delta N^{1-\alpha}+ {\mathbb V}\mathrm{ar}(\Lambda) \,\Delta^2 N^{2-2\alpha}}{1-{\e}^{-(\mu_1+\mu_2)\Delta N^{-\alpha}}},
\end{align*}
 which behaves in accordance with (\ref{gedrag}) for $N$ large.
\end{proof}

Recall that $\gamma =\max\{1,2-\alpha\}$; the above computation shows that the covariance matrix of ${\boldsymbol M}^{(N)}$ is essentially proportional to $N^\gamma$. 
Therefore, we expect that the centered and normalized version of the joint stationary queue length process converges to a (zero-mean) $d$-dimensional Gaussian random vector with covariance matrix $C$ such that
\begin{align*}
C_{i k} = \begin{cases}
1_{\{\alpha \leqslant1\}}\mathbb{E}\Lambda / \mu_i+ 1_{\{\alpha \geqslant1\}}\Delta {\mathbb V}{\rm ar}(\Lambda) /(2\mu_i) &\text{ if } i = k,\\
1_{\{\alpha \leqslant1\}}\mathbb{E}\Lambda /(\mu_i + \mu_k) + 1_{\{\alpha \geqslant1\}} \Delta {\mathbb V}{\rm ar}(\Lambda)/(\mu_i + \mu_k) &\text{ if } i \neq k,
\end{cases}
\end{align*}
for $i,k \in \{1,\dots,d\}$. 
This is verified in the next subsection.

\subsection{Proof of functional central limit theorem based on \MCLT} \label{3.2}

The main objective of this subsection is to derive a functional limit theorem for ${\boldsymbol M}^{(N)}(t)$, the vector describing the queue lengths of the scaled system at time $t$.
To this end, we consider the process $\tilde M_i^{(N)}(\cdot):= M_i^{(N)}(\cdot)/N$, for which we have
\begin{equation}\label{SDE}
\tilde M_i^{(N)}(t) = \tilde M_i^{(N)}(0) + N^{-1} Y_0\big(N \Psi[\Lambda^{(N)}](t)\big)-N^{-1} Y_i\big(N\mu_i\,\Psi[\tilde{M}^{(N)}](t)\big).
\end{equation}
We will need the following lemma, which uses the law of large numbers for Poisson processes; see \cite{ABM14}.
\begin{lemma} \label{lem2}
Let $Y$ be a unit-rate Poisson process.
Then for any $U>0$, almost surely
\[
\lim_{N \to \infty}\sup_{0\leqslant u \leqslant U} \left|\frac{Y(Nu)}{N}-u\right| = 0.
\]
\end{lemma}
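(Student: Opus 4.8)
The plan is to prove this uniform strong law of large numbers by combining the pointwise \textsc{slln} for the Poisson process with a monotonicity argument of P\'olya type, which upgrades pointwise convergence to uniform convergence over the compact interval $[0,U]$ precisely because the limit $u \mapsto u$ is continuous.

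First I would record the pointwise statement. By the classical strong law of large numbers for the Poisson process (equivalently, by applying the ordinary \textsc{slln} to the i.i.d.\ exponential interarrival times), $Y(t)/t \to 1$ almost surely as $t \to \infty$. Hence for every fixed $u>0$ one has $Y(Nu)/N = u\cdot Y(Nu)/(Nu) \to u$ almost surely, while the case $u=0$ is trivial since $Y(0)=0$. Fixing a countable dense set --- say the rationals $q \in [0,U]\cap \Q$ --- and intersecting the corresponding (countably many) almost-sure events, I obtain a single event of probability one on which $Y(Nq)/N \to q$ simultaneously for all such $q$.

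Next I would exploit that $u \mapsto Y(Nu)$ is non-decreasing for each $N$. Working on the almost-sure event above, fix $\varepsilon>0$ and choose a finite grid $0=q_0 < q_1 < \cdots < q_m = U$ of rationals with mesh $q_{i+1}-q_i < \varepsilon$. For $u \in [q_i,q_{i+1}]$, monotonicity gives $Y(Nq_i)/N \leqslant Y(Nu)/N \leqslant Y(Nq_{i+1})/N$; subtracting $u$ and using $q_i \leqslant u \leqslant q_{i+1}$ squeezes $Y(Nu)/N - u$ between $\big(Y(Nq_i)/N - q_i\big)-\varepsilon$ and $\big(Y(Nq_{i+1})/N - q_{i+1}\big)+\varepsilon$. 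Since the grid is finite, there is an $N_0$ with $|Y(Nq_i)/N - q_i| < \varepsilon$ for all $i$ and all $N \geqslant N_0$; combining the bounds yields $\sup_{0\leqslant u \leqslant U} |Y(Nu)/N - u| \leqslant 2\varepsilon$ for $N \geqslant N_0$. As $\varepsilon$ is arbitrary, the supremum tends to $0$ almost surely.

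The only genuine subtlety --- and the step I would flag as the crux --- is this passage from pointwise to uniform convergence: it works because $Y(N\,\cdot\,)/N$ is monotone and the limit is continuous, so uniform control is obtained by pinning the process at finitely many grid points and squeezing by monotonicity in between. Everything else (the \textsc{slln}, the countable-intersection argument) is routine, and since the statement is standard I would cite the analogous result in \cite{ABM14} rather than belabor the details.
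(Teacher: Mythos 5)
Your proof is correct, and it is in fact more than the paper provides: the paper does not prove Lemma \ref{lem2} at all, but simply states that it ``uses the law of large numbers for Poisson processes'' and cites \cite{ABM14}. Your argument --- pointwise \textsc{slln} for $Y(t)/t$ via the i.i.d.\ exponential interarrival times, a countable intersection over rational grid points, and then the monotonicity squeeze $Y(Nq_i)/N \leqslant Y(Nu)/N \leqslant Y(Nq_{i+1})/N$ to upgrade to uniform convergence --- is exactly the standard (P\'olya/Glivenko--Cantelli style) proof that underlies the cited result, so you have supplied the self-contained argument the paper delegates to a reference. One cosmetic point: your grid is required to consist of rationals with $q_m = U$, but $U$ need not be rational; fix this either by adjoining $U$ itself to the countable collection of points at which you invoke the pointwise \textsc{slln} (the collection stays countable), or by taking the last grid point to be a rational $q_m > U$ and noting that $\sup_{0\leqslant u\leqslant U}$ is dominated by the supremum over $[0,q_m]$, since $Y$ is defined on all of $[0,\infty)$. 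With that trivial repair the proof is complete, and your closing remark --- that in the paper one would simply cite \cite{ABM14} --- is precisely what the authors do.
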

The uniform convergence in Lemma \ref{lem2} entails that (\ref{SDE}) converges almost surely to the solution of the functional equation
\begin{align}\label{fl}
\varrho_i(t)=\varrho_{i,0} + t\,\E\Lambda - \mu_i\,\Psi [\varrho_i](t),
\end{align}
as $N \to \infty$, under the proviso  that $\tilde M^{(N)}_i(0)$ converges a.s.\ to some value $\varrho_{i,0}$ for $i=1,\dots, d$. 
The solution is given by a convex mixture of the initial position $\varrho_i(0)=\varrho_{i,0}$ and the limiting value $\E\Lambda/\mu_i$:
\begin{align}\label{expl}
\varrho_i(t)=\varrho_{i,0} \e^{-\mu_i t} + \frac{\E\Lambda}{\mu_i}\big(1-\e^{-\mu_i t}\big).
\end{align}

Having identified this {\it fluid limit}, the next objective is to establish an \FCLT{} for the centered and normalized process ${\boldsymbol U}^{(N)}(\cdot)$ given by
\begin{align}\label{fcltproc}
U_i^{(N)}(t):= N^\frac{\beta}{2}\big(\tilde M_i^{(N)}(t)-\varrho_i(t)\big), 
\end{align}
with $\beta:=2-\gamma=\min\{1,\alpha\}$. 
Here we closely follow the approach in \cite{ABM14}, where the idea is to use an {\sc mclt}, so as to obtain weak convergence to a (generalized) Ornstein-Uhlenbeck process.
The version of the {\sc mclt}{} that we need in our setting is stated below.
\begin{thm}[{\sc mclt}, \cite{ABM14}] \label{MCLT} 
Let $\{{\boldsymbol R}^{(N)}\}_{N \in \mathbb{N}}$ be a sequence of $\mathbb{R}^d$-valued martingales. 
Assume that the following condition on the jump sizes is met:
\begin{align}\label{jump}
\lim_{N \to \infty} \E \big[ \sup_{s \leqslant t} \big|{\boldsymbol R}^{(N)}(s)-{\boldsymbol R}^{(N)}(s^{-})\big|\big]=0 ;
\end{align}
in addition, assume that, as $N \to \infty$,
\[
\big[R_i^{(N)},R_k^{(N)}\big]_t \to C_{ik}(t)
\]
for a deterministic function $C_{ik}(t)$, continuous in $t$ for all $t>0$ and for $i,k=1, \dots, d$.
Then the process ${\boldsymbol R}^{(N)}$ converges weakly to a centered Gaussian process ${\boldsymbol W}$ with independent increments whose covariance matrix is characterized by
\[\E\big[W_{i}(t) \cdot W_k(t)^{\rm T}\big]=C_{ik}(t).\]
\end{thm}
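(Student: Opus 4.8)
The plan is to prove the statement in the standard two-stage fashion for functional martingale central limit theorems: first establish C-tightness of the sequence $\{{\boldsymbol R}^{(N)}\}$ in the Skorokhod space, and then identify every subsequential weak limit as the prescribed Gaussian process by means of L\'evy's martingale characterization. Since the limit law will turn out to depend only on the functions $C_{ik}(\cdot)$ and not on the chosen subsequence, C-tightness together with a unique identification of limit points upgrades to weak convergence of the full sequence. This is essentially the route taken in the classical references (Ethier--Kurtz, Jacod--Shiryaev, Whitt), specialized to the hypotheses at hand.

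First I would establish tightness. For each coordinate, the quadratic variation $[R_i^{(N)},R_i^{(N)}]_t$ converges to the continuous deterministic function $C_{ii}(t)$; combined with the vanishing-jump condition (\ref{jump}), this places us exactly in the setting of the standard functional martingale tightness criteria (Aldous--Rebolledo, or Theorem VI.4.13 in Jacod--Shiryaev). Concretely, convergence of the quadratic variation to a continuous limit controls the oscillation of ${\boldsymbol R}^{(N)}$ over small time windows, while (\ref{jump}) forces all jumps to vanish uniformly. Together these yield C-tightness, i.e.\ tightness with every limit point supported on continuous paths.

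Next I would characterize the limit points. Fix a weakly convergent subsequence ${\boldsymbol R}^{(N)} \Rightarrow {\boldsymbol W}$. Because each ${\boldsymbol R}^{(N)}$ is a martingale whose jumps vanish in the sense of (\ref{jump}), the limit ${\boldsymbol W}$ is a continuous local martingale; the uniform integrability required to transfer the martingale property to the limit follows from the $L^1$-control implied by the convergence of the quadratic variations. Since the quadratic-covariation functional is continuous (in the Skorokhod topology) at paths without common jumps, one gets $[R_i^{(N)},R_k^{(N)}]_t \to [W_i,W_k]_t$, and comparison with the hypothesis $[R_i^{(N)},R_k^{(N)}]_t \to C_{ik}(t)$ forces $[W_i,W_k]_t = C_{ik}(t)$, which is deterministic. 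A continuous local martingale with deterministic quadratic covariation is, by L\'evy's characterization after a deterministic time change, a centered Gaussian process with independent increments and covariance $\E[W_i(t)\,W_k(t)] = C_{ik}(t)$, which is precisely the asserted limit. As this law is the same for every subsequence, all limit points coincide and the whole sequence converges.

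The main obstacle I anticipate lies in the tightness step, specifically in verifying the Aldous--Rebolledo conditions cleanly: one must control the increments of the quadratic-variation process uniformly in $N$, and one must justify that both the martingale property and the convergence of $[\,\cdot\,,\cdot\,]$ pass to the limit by an appropriate uniform-integrability argument rather than merely pathwise. The vanishing-jump condition (\ref{jump}) is exactly what guarantees continuity of the limit and legitimizes the continuous-mapping step for the quadratic covariation, so its role is pivotal throughout.
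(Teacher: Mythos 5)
Note first that the paper does not prove this statement at all: Theorem \ref{MCLT} is quoted from \cite{ABM14} (where it ultimately rests on the classical martingale FCLT of Ethier and Kurtz) and is used in the paper purely as a black box in the proofs of Lemma \ref{term3} and Theorem \ref{FCLT}. There is therefore no internal proof to compare yours with; what you have produced is a proof sketch of the imported result itself, along the standard textbook lines: C-tightness from bracket convergence plus the vanishing-jump condition (\ref{jump}), identification of subsequential limits as continuous local martingales with deterministic covariation, L\'evy's characterization after a deterministic time change, and uniqueness of the limit law to pass from subsequences to the full sequence. That architecture is the right one.

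Two steps in your sketch are, however, not justified as written. The lesser one: the hypothesis gives convergence of $[R_i^{(N)},R_k^{(N)}]_t$ for each fixed $t$, while the Aldous--Rebolledo/Jacod--Shiryaev tightness criteria need locally uniform control; this is repaired by noting that the diagonal brackets are nondecreasing and the limits $C_{ii}(\cdot)$ continuous, so pointwise convergence is automatically locally uniform (P\'olya's argument), with off-diagonal entries handled by polarization. The more serious one: you assert that the quadratic-covariation map is continuous in the Skorokhod topology at paths without common jumps, and use this to conclude $[W_i,W_k]_t=C_{ik}(t)$. Quadratic covariation is not a pathwise functional -- a smooth path and a Brownian path can be arbitrarily close in the Skorokhod (even uniform) topology while their brackets differ by a fixed amount -- so no such continuity statement can hold. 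The standard repairs are either the predictable-uniform-tightness machinery (Jacod--Shiryaev, Thm.~VI.6.26), whose hypotheses are supplied exactly by (\ref{jump}) and the bracket convergence, or the more elementary route of showing that $R_i^{(N)}R_k^{(N)}-[R_i^{(N)},R_k^{(N)}]$ is a martingale and transferring that property to the limit; in either case one also needs an honest localization/uniform-integrability argument (e.g.\ stopping when the bracket first exceeds $C_{ii}(t)+1$, with (\ref{jump}) controlling the overshoot), since convergence in probability of the brackets does not by itself yield the $L^1$-control you invoke. With those repairs your argument is complete.
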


Introducing compensated unit-rate Poisson processes $\tilde Y_i(t):=Y_i(t)-t$, we define
\begin{align}\label{processes}
&\check {\boldsymbol Y}_{0}^{(N)}(t):=N^{\frac{\beta}{2} -1}\begin{pmatrix}\tilde Y_0\big(N\Psi[\Lambda^{(N)}](t)\big)\\ \vdots \\ \tilde Y_0\big(N\Psi[\Lambda^{(N)}](t)\big)\end{pmatrix} ,\\
&\nonumber \,\\
&\check{\boldsymbol Y}^{(N)}(t):=N^{\frac{\beta}{2} -1}\begin{pmatrix}\tilde Y_1\big(N\mu_1\,\Psi[\tilde M_1^{(N)}](t)\big)\\ \vdots \\ \tilde Y_d\big(N\mu_d\,\Psi[\tilde M_d^{(N)}](t)\big)\end{pmatrix}.
\end{align}
\begin{lemma}\label{term3}
Consider the $d$-dimensional processes $\check {\boldsymbol Y}_{0}^{(N)}(\cdot)$ and $\check{\boldsymbol Y}^{(N)}(\cdot)$.
If $\alpha\geqslant 1$, then as $N \to \infty$ these processes converge weakly to $d$-dimensional zero-mean Brownian motions with covariance matrices $K_0(t) := (t\,{\mathbb E}\Lambda){\boldsymbol 1}{\boldsymbol 1}^{\rm T} $ and
$K(t):={\rm diag}\{\mu_1\,\Psi[\varrho_1](t),\ldots,\mu_d\,\Psi[\varrho_d](t)\}$, respectively; if $\alpha<1$ the limiting covariance matrices equal $\boldsymbol{0}$.\end{lemma}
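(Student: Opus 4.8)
The plan is to apply the martingale central limit theorem (Theorem \ref{MCLT}) to each of the two sequences, after first noting that both $\check{\boldsymbol Y}_0^{(N)}(\cdot)$ and $\check{\boldsymbol Y}^{(N)}(\cdot)$ are $\R^d$-valued martingales. Indeed, each coordinate is a scaled compensated unit-rate Poisson process $\tilde Y_i$ evaluated at a continuous, adapted, increasing time change---namely $N\Psi[\Lambda^{(N)}](t)$ for $\check{\boldsymbol Y}_0^{(N)}$ and $N\mu_i\Psi[\tilde M_i^{(N)}](t)$ for $\check{\boldsymbol Y}^{(N)}$---so that the composition is a martingale by the optional-sampling argument for time-changed Poisson martingales, with the required integrability following from the finiteness of the mean time change. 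It then remains to verify the two hypotheses of Theorem \ref{MCLT}: the vanishing-jump condition (\ref{jump}) and convergence of the quadratic covariations $[R_i^{(N)},R_k^{(N)}]_t$ to a deterministic, continuous limit.

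The jump condition is immediate: every jump of a coordinate has deterministic size $N^{\beta/2-1}$, and since $\beta=\min\{1,\alpha\}\leqslant 1$ we have $N^{\beta/2-1}\leqslant N^{-1/2}\to 0$. For $\check{\boldsymbol Y}_0^{(N)}$ all $d$ coordinates jump simultaneously with $Y_0$, so the jump norm is $\sqrt d\,N^{\beta/2-1}\to 0$; for $\check{\boldsymbol Y}^{(N)}$ the independent processes $Y_1,\dots,Y_d$ a.s.\ share no jump times, so the jump norm is $N^{\beta/2-1}\to 0$. In either case the supremum in (\ref{jump}) is deterministic and vanishes.

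For the covariations I would use that the quadratic variation of a compensated Poisson process equals its jump count, so that, for instance, $[\check Y_{0,i}^{(N)}]_t=N^{\beta-2}\,Y_0(N\Psi[\Lambda^{(N)}](t))=N^{\beta-1}\cdot Y_0(N\Psi[\Lambda^{(N)}](t))/N$. Since $\Psi[\Lambda^{(N)}](t)\to t\,\E\Lambda$ a.s.\ and the time changes stay in a compact set, the uniform law of large numbers of Lemma \ref{lem2} (applied through a random-time-change argument) gives $Y_0(N\Psi[\Lambda^{(N)}](t))/N\to t\,\E\Lambda$. The prefactor $N^{\beta-1}$ then produces the trichotomy: it equals $1$ when $\alpha\geqslant 1$ (so $\beta=1$) and tends to $0$ when $\alpha<1$ (so $\beta=\alpha$). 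As all coordinates of $\check{\boldsymbol Y}_0^{(N)}$ coincide, every entry of the covariation matrix has the same limit $t\,\E\Lambda\cdot 1_{\{\alpha\geqslant 1\}}$, yielding $K_0(t)=(t\,\E\Lambda)\boldsymbol 1\boldsymbol 1^{\rm T}$ (resp.\ $\boldsymbol 0$). For $\check{\boldsymbol Y}^{(N)}$, the same computation on the diagonal replaces the time change by $\mu_i\Psi[\tilde M_i^{(N)}](t)$, which converges to $\mu_i\Psi[\varrho_i](t)$ by the fluid limit (\ref{fl})--(\ref{expl}), giving diagonal limits $\mu_i\Psi[\varrho_i](t)\cdot 1_{\{\alpha\geqslant 1\}}$; the off-diagonal covariations vanish because independent Poisson processes have no common jumps, so the limit is $K(t)$ (resp.\ $\boldsymbol 0$). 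Since both $K_0(t)$ and $K(t)$ are continuous and entrywise nondecreasing in $t$, Theorem \ref{MCLT} delivers weak convergence to the stated Gaussian (Brownian) limits.

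The main obstacle I anticipate is the rigorous treatment of the random, $N$-dependent time changes inside the covariations. Lemma \ref{lem2} is stated for a fixed deterministic argument $u$, whereas here $Y_0$ and the $Y_i$ are composed with the random processes $\Psi[\Lambda^{(N)}](t)$ and $\Psi[\tilde M_i^{(N)}](t)$, which themselves depend on $N$ and converge only in the limit. Justifying the interchange---that $Y(N A_N(t))/N\to A(t)$ whenever $A_N(t)\to A(t)$ a.s.\ with the $A_N(t)$ confined to a compact set---requires combining the uniform convergence of Lemma \ref{lem2} with the a.s.\ convergence of the time changes, the latter for $\check{\boldsymbol Y}^{(N)}$ relying on the previously established fluid limit $\tilde M_i^{(N)}\to\varrho_i$. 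Verifying that the time-changed compensated processes are genuine rather than merely local martingales is a secondary technical point, settled by the finiteness of the mean time change.
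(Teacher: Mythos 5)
Your proposal is correct and follows essentially the same route as the paper's own proof: establish the martingale property, verify the jump condition (\ref{jump}) via the deterministic jump size $N^{\beta/2-1}\leqslant N^{-1/2}$, identify the quadratic covariations with the scaled jump counts $N^{\beta-2}Y_0(N\Psi[\Lambda^{(N)}](t))$ and $N^{\beta-2}Y_i(N\mu_i\Psi[\tilde M_i^{(N)}](t))$, and pass to the limit using Lemma \ref{lem2} and the fluid limit before invoking Thm.\ \ref{MCLT}. If anything, your write-up is more careful than the paper's on two points it glosses over — the vanishing (rather than merely finite) expected jump supremum, and the justification of composing Lemma \ref{lem2} with the random, $N$-dependent time changes — but the substance is identical.
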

\begin{proof}
We start by checking the conditions of Thm.\ \ref{MCLT}.
First, observe that for each $N$, $\check {\boldsymbol Y}_{0}^{(N)}(\cdot)$ and $\check{\boldsymbol Y}^{(N)}(\cdot)$ are $d$-dimensional real-valued martingales.
Also, condition (\ref{jump}) is met, as both for ${\boldsymbol R}^{(N)}=\check {\boldsymbol Y}_{0}^{(N)}$ and ${\boldsymbol R}^{(N)}=\check {\boldsymbol Y}^{(N)}$,
\[\lim_{N \to \infty} \E \big[ \sup_{s \leqslant t} \left|{\boldsymbol R}^{(N)}-{\boldsymbol R}^{(N)}(s^{-})\right|\big]<\infty,\]
whereas $N^{\frac{\beta}{2}-1}\leqslant1/{\sqrt{N}}$ converges to zero.

Note that $\beta=\min\{1,\alpha\}$, so that $\beta -2=\min\{-1,\alpha - 2\}$.
Now observe that for $\alpha \geqslant1$ (and hence $\beta-2 = -1$) the quadratic covariation of $\check {\boldsymbol Y}_{0}^{(N)}(\cdot)$,
\[
\big[N^{\frac{\beta}{2}-1} \tilde Y_0\big(N\Psi[\Lambda^{(N)}](t)\big)\big]_t = N^{\beta-2} Y_0\big(N\Psi[\Lambda^{(N)}](t)\big),
\]
converges to $t\,{\mathbb E}\Lambda$ as $N \to \infty$ ($0$ for $\alpha < 1$), by virtue of Lemma \ref{lem2}. 
The covariance matrix for $\check{\boldsymbol Y}^{(N)}(\cdot)$ is determined in the same way; for $\alpha \geqslant 1$ the diagonal entries are given by
\begin{align*}
\lim_{N\to\infty}\big[N^{\frac{\beta}{2} -1}\tilde Y_i\big(N\mu_i\,\Psi[\tilde M_i^{(N)}](t)\big)\big]_t &= \lim_{N\to\infty}N^{\beta-2}Y_i\big(N\mu_i\,\Psi[\tilde M^{(N)}](t)\big)= \mu_i\,\Psi[\varrho_i](t)
\end{align*}
(which would equal $0$ for $\alpha<1$),
whereas for $i \neq k$ (then $\tilde Y_i(\cdot)$ and $\tilde Y_k(\cdot)$ are independent)
\[
\lim_{N\to\infty}\big[N^{\frac{\beta}{2}-1}\tilde Y_i\big(N\mu_i\,\Psi[\tilde M_i^{(N)}](t)\big),N^{\frac{\beta}{2} -1}\tilde Y_k\big(N\mu_k\,\Psi[\tilde M_k^{(N)}](t)\big)\big]_t =0,
\]
with $i,k\in \{1,\dots,d\}$.
For $\alpha \geqslant 1$, Thm.\ \ref{MCLT} yields that the processes converge weakly to $d$-dimensional Brownian motions with covariance matrices $K_0(t)$ and $K(t)$.
On the other hand, for $\alpha < 1$ the entries of the covariance matrices all vanish as $N \to \infty$. 
As a result, both $\check {\boldsymbol Y}_{0}^{(N)}(\cdot)$ and $\check{\boldsymbol Y}^{(N)}(\cdot)$ converge to a process identical to ${\boldsymbol 0}$.
\end{proof}

Stated below is the main theorem of this section: an \FCLT{} for ${\boldsymbol U}^{(N)}(\cdot)$, the process defined via (\ref{fcltproc}).
In line with earlier findings, three regimes need to be distinguished: $\alpha>1$ (the fast regime), $\alpha<1$ (the slow regime) and $\alpha=1$ (the intermediate regime).

\begin{thm}[{\sc fclt}] \label{FCLT}
As $N \rightarrow \infty$, ${\boldsymbol U}^{(N)}(\cdot)$ converges weakly to a zero-mean $d$-dimensional Gaussian process with covariance matrix given by
\begin{align}\label{cov}
C_{ii}(t)&:=
1_{\{\alpha \geqslant1\}}
\big( \E \Lambda / \mu_i+\varrho_{i,0} \e^{-\mu_i t}\big)(1-\e^{-\mu_i t}) 
+ 1_{\{\alpha \leqslant1\}}
\Delta\Var\,\Lambda /(2 \mu_i)(1-\e^{-2\mu_i t}),\\ \label{covv}
C_{ik}(t)&:=\big(
1_{\{\alpha \geqslant1\}}
\E \Lambda /(\mu_i+\mu_k)
+
1_{\{\alpha \leqslant1\}}
\Delta {\Var\,\Lambda} /(\mu_i+\mu_k)
\big)
\cdot (1-\e^{-(\mu_i+\mu_k)t}),
\end{align}
for $i\neq k$ ($i,k \in \{1,\dots,d\}$).
\end{thm}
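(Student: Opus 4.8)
The plan is to turn the distributional identity (\ref{ql})--(\ref{SDE}) into a linear stochastic integral equation for $\boldsymbol U^{(N)}(\cdot)$ driven by a noise that converges to a Gaussian process, and then to read off the covariance from the explicit solution. Subtracting the fluid equation (\ref{fl}) from (\ref{SDE}), writing each Poisson term as its compensator plus the compensated martingale $\tilde Y_i(\cdot)$, and multiplying by $N^{\beta/2}$, one obtains for each $i$
\begin{equation*}
U_i^{(N)}(t)=U_i^{(N)}(0)+\check Y_{0,i}^{(N)}(t)+G^{(N)}(t)-\check Y_i^{(N)}(t)-\mu_i\,\Psi[U_i^{(N)}](t),
\end{equation*}
where $\check Y_{0,i}^{(N)}$ and $\check Y_i^{(N)}$ are the $i$-th components of the martingale vectors in (\ref{processes}), and $G^{(N)}(t):=N^{\beta/2}\big(\Psi[\Lambda^{(N)}](t)-t\,\E\Lambda\big)$ is the rescaled deviation of the cumulative arrival rate from its fluid value. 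This term, common to all coordinates, is the genuinely new ingredient: it captures the fluctuations of the random environment that the law of large numbers underlying (\ref{fl}) averages out, and it is the only surviving source of randomness when $\alpha<1$.

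First I would establish joint weak convergence of the driving vector $\boldsymbol B^{(N)}:=\check{\boldsymbol Y}_0^{(N)}+G^{(N)}\boldsymbol 1-\check{\boldsymbol Y}^{(N)}$. The two martingale contributions are handled by Lemma \ref{term3}: for $\alpha\geqslant1$ they converge to mutually independent Brownian motions with covariances $K_0(t)=(t\,\E\Lambda)\boldsymbol 1\boldsymbol 1^{\rm T}$ and $K(t)={\rm diag}\{\mu_i\Psi[\varrho_i](t)\}$ (the fluid limit (\ref{expl}) being used to linearize the service time change), and for $\alpha<1$ they vanish. For $G^{(N)}$ I would use that $\Psi[\Lambda^{(N)}](t)$ is, up to a negligible boundary contribution, a sum of $\approx tN^\alpha/\Delta$ i.i.d.\ terms $\Delta N^{-\alpha}\Lambda_j$; a Donsker-type \FCLT{} then shows that for $\alpha\leqslant1$ (so $\beta=\alpha$) $G^{(N)}(\cdot)$ converges to a Brownian motion with variance rate $\Delta\,\Var\Lambda$, while for $\alpha>1$ it converges to $\boldsymbol 0$. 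Since $Y_0,Y_1,\ldots,Y_d$ and the environment $\{\Lambda_j\}$ are mutually independent and, by Lemma \ref{lem2}, the random time changes all converge to deterministic limits, the three groups are asymptotically independent; hence $\boldsymbol B^{(N)}$ converges to a centered Gaussian process $\boldsymbol B$ whose covariation rate is the sum of the three contributions.

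Next I would invoke the lemma of Kurtz \cite{ABM14}: the map sending a path $\boldsymbol b$ to the solution $\boldsymbol u$ of $u_i(t)=u_i(0)+b_i(t)-\mu_i\int_0^t u_i(s)\,{\rm d}s$ is continuous (indeed Lipschitz), with explicit solution $u_i(t)=\e^{-\mu_i t}u_i(0)+\int_0^t\e^{-\mu_i(t-s)}\,{\rm d}b_i(s)$. Applying this together with the continuous mapping theorem, $\boldsymbol U^{(N)}(\cdot)$ converges weakly to the Ornstein--Uhlenbeck process $\boldsymbol U$ driven by $\boldsymbol B$, which is Gaussian because $\boldsymbol B$ is.

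Finally I would compute the covariance. Assuming the initial fluctuation vanishes, the explicit solution gives $C_{ik}(t)=\int_0^t\e^{-(\mu_i+\mu_k)(t-s)}c_{ik}(s)\,{\rm d}s$, where $c_{ik}(s)$ is the covariation rate of $\boldsymbol B$: for $\alpha\geqslant1$ the diagonal rate is $\E\Lambda+\mu_i\varrho_i(s)$ and the off-diagonal rate is $\E\Lambda$, while for $\alpha\leqslant1$ the environmental term adds the constant $\Delta\,\Var\Lambda$ to every entry. Substituting $\varrho_i(s)=\varrho_{i,0}\e^{-\mu_i s}+(\E\Lambda/\mu_i)(1-\e^{-\mu_i s})$ from (\ref{expl}) and carrying out the elementary integrals reproduces (\ref{cov}) and (\ref{covv}). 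The main obstacle is the treatment of $G^{(N)}$---its functional central limit theorem and, above all, its asymptotic independence from the Poisson martingale terms---since this is precisely the piece that Lemma \ref{term3} does not cover, yet which carries the entire limit in the slow regime $\alpha<1$.
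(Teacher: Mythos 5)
Your proposal follows essentially the same route as the paper's proof: the identical three-part decomposition of $U_i^{(N)}$ into the reverting term $U_i^{(N)}(0)-\mu_i\,\Psi[U_i^{(N)}](t)$, the environment fluctuation $G^{(N)}(t)=N^{\beta/2}\big(\Psi[\Lambda^{(N)}](t)-t\,\E\Lambda\big)$ handled by the i.i.d.\ Donsker theorem, and the compensated Poisson martingales handled by Lemma \ref{term3}, followed by solving the limiting Ornstein--Uhlenbeck equation and integrating the covariation rates to recover (\ref{cov})--(\ref{covv}). The only differences are presentational: you run the Kurtz/continuous-mapping step once for the joint driver rather than regime by regime, and you explicitly flag the joint-convergence (asymptotic independence) of $G^{(N)}$ and the martingale terms, a point the paper relies on but treats tacitly.
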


\begin{proof}
Using (\ref{SDE}), we write
\begin{equation}\label{U}
U_i^{(N)}(t)=
N^{\frac{\beta}{2}}\big(\tilde M_i^{(N)}(0) + N^{-1} Y_0\big(N \Psi[\Lambda^{(N)}](t)\big)- N^{-1} Y_i\big(N\mu_i\,\Psi[\tilde M_i^{(N)}](t)\big)- \varrho_i(t) \big),
\end{equation}
for $i=1, \dots, d$.
Adding and subtracting $\varrho_{i,0}$, (\ref{U}) is equivalent to
\begin{align*}
U_i^{(N)}(t)=
&N^{\frac{\beta}{2}}\big(\tilde M_i^{(N)}(0)-\varrho_{i,0}\big) - N^{\frac{\beta}{2}}\big(\varrho_i(t)-\varrho_{i,0}\big) \\
&+N^{\frac{\beta}{2}-1}\big(Y_0\big(N \Psi[\Lambda^{(N)}](t)\big)-Y_i\big(N\mu_i\,\Psi[\tilde M_i^{(N)}](t)\big)\big),
\end{align*}
which, by filling out the implicit form of $\varrho_i(t)$ as in (\ref{fl}), simplifies to 
\[
U_i^{(N)}(t) = U_{0,i}^{(N)}(t) + U_{1}^{(N)}(t) +  U_{2,i}^{(N)}(t),
\]
with
\begin{align*}
U_{0,i}^{(N)}(t) &:= U_i^{(N)}(0)-\mu_i\,\Psi[U_i^{(N)}](t),\\
U_{1}^{(N)}(t) &:=N^{\frac{\beta}{2}}\big(\Psi[\Lambda^{(N)}](t)-t\,\E\Lambda\big),\\
U_{2,i}^{(N)}(t)&:=N^{\frac{\beta}{2} -1}\big(\tilde{Y}_0\big(N\Psi[\Lambda^{(N)}](t)\big)- \tilde Y_i\big(N\mu_i\,\Psi[\tilde M_i^{(N)}](t)\big)\big).
\end{align*}
We consider the three individual components separately.
\begin{enumerate}[(i)]
\item
Component $U_{0,i}^{(N)}(t)$ consists of the starting value of the process, which is assumed to converge to some value $U_i(0)$, minus a reverting term.
It is now straightforward that, as $N \to \infty,$  $U_{0,i}^{(N)}(\cdot)$
converges to $U_{0,i}(t)=U_i(0)-\mu_i\,\Psi[U_{i}](t)$.
\item
Then  consider $U_{1}^{(N)}(\cdot)$.
For $\alpha\le 1$ (and hence $\frac{\beta}{2} = \frac{\alpha}{2}$), the standard functional central limit theorem for partial sums of i.i.d.\ random variables entails that, 
 as $N \to \infty,$
\[
U_{1}^{(N)}(\cdot) \to \sqrt{\Delta\,{\mathbb V}{\rm ar}\,\Lambda} \cdot V(\cdot),
\]
with $V(\cdot)$ a standard Brownian motion.
On the other hand, for $\alpha>1$ the limiting process is identical to ${\boldsymbol 0}$, as a consequence of $\frac{\beta}{2}=\frac12< \frac{\alpha}{2}$.
\item
Finally, from Lemma \ref{term3}, we conclude that ${\boldsymbol U}_{2}^{(N)}(\cdot)$ converges weakly to a  $d$-dimensional zero-mean Brownian motion with covariance matrix $K_0(t)+K(t)$ for $\alpha\geqslant 1$, and to ${\boldsymbol 0}$ else.
\end{enumerate}
Using the above observations, we can now complete the proof.
Each of the three regimes will be considered separately. 

\vspace{2mm}

\noindent {\it 1.~Fast regime $(\alpha >1)$.}
We have obtained above that ${\boldsymbol U}^{(N)}(\cdot)$ converges weakly to the solution ${\boldsymbol U}(\cdot)$ of the $d$-dimensional stochastic integral equation given by
\[U_i(t)=U_i(0)-\mu_i\,\Psi[U_i](t)+ W_i(t\, \E \Lambda+\mu_i\,\Psi[\varrho_i](t))\, \text{ for }i=1, \dots, d\]
with $W_1(\cdot),\ldots,W_d(\cdot)$ standard Brownian motions (but not  independent), or equivalently 
\[U_i(t)=U_i(0)-\mu_i\,\Psi[U_i](t)+\tilde W_0(t\, \E \Lambda) + \tilde W_i\big(\mu_i\,\Psi[\varrho_i](t)\big) \]
with $\tilde W_0(\cdot),\tilde W_1(\cdot),\ldots,\tilde W_d(\cdot)$ independent standard Brownian motions.
It takes a routine calculation to derive that
\[
U_i(t)={\e}^{-\mu_i t}\big( U_i(0)+  \int_0^t \sqrt{\E \Lambda+\mu_i \varrho_i(s)}\,{\e}^{\mu_i s}\, \mathrm{d} W_i(s)\big).\]
All linear combinations of the $U_i(\cdot)$ are Gaussian processes, so we conclude that this $d$-dimensional process is Gaussian. 
It is readily seen that $\E \,U_i(t)=U_i(0) {\e}^{-\mu_i t}$. 
For the variance, an elementary computation gives
\[
{\mathbb V}{\rm ar}\,U_i(t)={\e}^{-2\mu_i t}\big(\int_0^t \big(\E \Lambda+\mu_i \varrho_i(s)\big) \e^{2 \mu_i s} \, \mathrm{d}s\big)
=\big(\E \Lambda/\mu_i+\varrho_{i,0}{\e}^{-\mu_i t}\big)(1-\e^{-\mu_i t}).
\]
Likewise, for the covariance,
with
\[{\mathcal U}_i(t):= \sqrt{\E \Lambda} \int_0^t {\e}^{\mu_i s}\, \mathrm{d}\tilde W_0(s) +\int_0^t \sqrt{\mu_i\varrho_i(s)}{\e}^{\mu_i s} \, \mathrm{d}\tilde W_i(s),\]
it follows that, for $i\not=k$,
\begin{align*}
{\mathbb C}{\rm ov}(U_i(t),U_k(t))&={\e}^{-(\mu_i+\mu_k) t}\,\E \,
[{\mathcal U}_i(t)\;{\mathcal U}_k(t)]
\\
&={\e}^{-(\mu_i+\mu_k) t}\,\E \Lambda\cdot \E \big[ \int_0^t {\e}^{-\mu_i s}\, \mathrm{d}\tilde W_0(s) \cdot \int_0^t {\e}^{-\mu_k s}\, \mathrm{d}\tilde W_0(s)\big]\\
&={\e}^{-(\mu_i+\mu_k) t}\,\E \Lambda \int_0^t {\e}^{(\mu_i+\mu_k)s}\, \mathrm{d}s
= \E \Lambda /(\mu_i+\mu_k)(1-{\e}^{-(\mu_i+\mu_k)t}).
\end{align*}
This shows (\ref{cov}) for $\alpha>1$.

\vspace{2mm}

\noindent {\it 2.~Slow regime $(\alpha < 1)$.}
In the slow regime, ${\boldsymbol U}^{(N)}(\cdot)$ converges to the solution of 
\[U_i(t) = U_i(0) - \mu_i\,\Psi[U_i](t)+ \big(\sqrt{\Delta\,{\mathbb V}{\rm ar}\,\Lambda} \big)V(t)\, \text{ for } i=1, \dots, d,\] 
which can be written as
\[
\mathrm{d}U_i(t)=- \mu_i U_i(t)\, \mathrm{d}t + \big(\sqrt{\Delta\,{\mathbb V}{\rm ar}\,\Lambda} \big)\,\mathrm{d}V(t) .
\]
Therefore the $U_i(\cdot)$ are Ornstein-Uhlenbeck processes given by:
\begin{align*}
U_i(t)&={\e}^{-\mu_i t} \big(U_i(0) +  \int_0^t \sqrt{\Delta {\mathbb V}{\rm ar}(\Lambda)}\,{\e}^{\mu_i s}\, \mathrm{d}V(s)\big).
\end{align*}
As before, we can conclude that this $d$-dimensional process is Gaussian with expectation vector given by $U_i(0) {\e}^{-\mu_i t}$.
Computations as above reveal that for $\alpha <1$, as claimed  in  (\ref{cov}), 
\[\Cov(U_i(t),U_k(t))=\Delta\Var(\Lambda) / (\mu_i+\mu_k)(1-{\e}^{-(\mu_i+\mu_k)t}).\]

\noindent {\it 3.~Intermediate regime $(\alpha = 1)$}.
In this regime, a combination of the processes from the other cases appears:
\[
\mathrm{d}U_i(t)=-\mu_i U_i(t)\, \mathrm{d}t +\sqrt{\E \Lambda}\,\mathrm{d} \tilde W_0(t) +\sqrt{\mu_i \varrho_i(t)}\, \mathrm{d}\tilde W_i(t)+ \sqrt{ \Delta {\mathbb V}{\rm ar}(\Lambda)}\, \mathrm{d}V(t).
\]
The marginal solutions $U_i(t)$ are, for $i=1, \dots, d$, equal to
\begin{align*}
{\e}^{-\mu_i t}\big( U_i(0)+  \int_0^t \sqrt{\E \Lambda}\,{\e}^{\mu_i s}\, \mathrm{d}\tilde W_0(s)+\int_0^t \sqrt{\mu_i \varrho_i(s)}\,{\e}^{\mu_i s}\, \mathrm{d}\tilde W_i(s)+ \int_0^t \sqrt{{\Delta {\mathbb V}{\rm ar}(\Lambda)}}\,{e}^{\mu_i s}\,\mathrm{d}V(s)\big).
\end{align*}
Again, we conclude that this $d$-dimensional process is Gaussian with expectation vector given by $U_i(0) {e}^{-\mu_i t}$; routine computations yield the desired covariance matrix, as given in (\ref{cov}) and (\ref{covv}).
This completes the proof.
\end{proof}

It is interesting to study the impact of the scaling parameter $\alpha$ on the correlation between the individual queue lengths.
Remarkably,  it turns out that for $\alpha\not=1$ this correlation depends on the service rates only, whereas 
for $\alpha=1$ also the first and second moment of $\Lambda$ play a role; see the following corollary for a result on the stationary regime.

\begin{corollary}[Correlation coefficients]
In stationarity, the correlation coefficient for $i\not=k$ satisfies 
\begin{equation}\label{corrr}
\lim_{N\to\infty} {\mathbb C}{\rm orr}(M_i^{(N)}, M_k^{(N)})=c_{ik}(\alpha) \cdot \frac{\sqrt{\mu_{i} \mu_k}}{\mu_i+\mu_k},
\end{equation}
for some constant $c_{ik}(\alpha) \in [1,2]$. The constant $c_{ik}(\alpha)$ equals $1$ for $\alpha>1$ and
$2$ for $\alpha<1$.
\end{corollary}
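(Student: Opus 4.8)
The plan is to compute the limiting correlation coefficient directly from the covariance asymptotics already established in Lemma~\ref{lem:gedrag} together with the variance asymptotics in (\ref{dich}). Since the correlation coefficient is defined as
\[
{\mathbb C}{\rm orr}(M_i^{(N)}, M_k^{(N)})=\frac{\Cov(M_i^{(N)}, M_k^{(N)})}{\sqrt{\Var M_i^{(N)}\,\Var M_k^{(N)}}},
\]
the whole computation reduces to substituting the three-case asymptotic expressions for numerator and denominator and taking $N\to\infty$ in each of the regimes $\alpha>1$, $\alpha<1$, and $\alpha=1$ separately.

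First I would treat the \textbf{fast regime $\alpha>1$}. Here (\ref{dich}) gives $\Var M_i^{(N)}\sim N\,\E\Lambda/\mu_i$, so that $\sqrt{\Var M_i^{(N)}\,\Var M_k^{(N)}}\sim N\,\E\Lambda/\sqrt{\mu_i\mu_k}$, while (\ref{gedrag}) gives $\Cov(M_i^{(N)}, M_k^{(N)})\sim N\,\E\Lambda/(\mu_i+\mu_k)$. Dividing, the common factor $N\,\E\Lambda$ cancels and the limit is $\sqrt{\mu_i\mu_k}/(\mu_i+\mu_k)$, which identifies $c_{ik}(\alpha)=1$. Next, in the \textbf{slow regime $\alpha<1$}, (\ref{dich}) gives $\Var M_i^{(N)}\sim N^{2-\alpha}\Delta\Var\Lambda/(2\mu_i)$, hence $\sqrt{\Var M_i^{(N)}\,\Var M_k^{(N)}}\sim N^{2-\alpha}\Delta\Var\Lambda/(2\sqrt{\mu_i\mu_k})$, whereas (\ref{gedrag}) gives $\Cov(M_i^{(N)}, M_k^{(N)})\sim N^{2-\alpha}\Delta\Var\Lambda/(\mu_i+\mu_k)$. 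The factor $N^{2-\alpha}\Delta\Var\Lambda$ cancels, but now the denominator carries a $\tfrac12$ while the numerator does not, so the limit picks up an extra factor of $2$; this yields $c_{ik}(\alpha)=2$.

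Finally, in the \textbf{intermediate regime $\alpha=1$}, both (\ref{dich}) and (\ref{gedrag}) contribute a sum of the two effects at the same order $N$. The variance is $\sim N(\E\Lambda/\mu_i+\Delta\Var\Lambda/(2\mu_i))$ and the covariance is $\sim N(\E\Lambda/(\mu_i+\mu_k)+\Delta\Var\Lambda/(\mu_i+\mu_k))$. Dividing and extracting the factor $\sqrt{\mu_i\mu_k}/(\mu_i+\mu_k)$ gives an intermediate constant $c_{ik}(1)$ whose precise value depends on $\E\Lambda$ and $\Var\Lambda$; a short inspection of the convexity-type bound shows this constant lies in $[1,2]$, consistent with the two extreme cases. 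The only mild subtlety — and the step I would be most careful about — is verifying that $c_{ik}(\alpha)\in[1,2]$ uniformly; since $c_{ik}(1)$ is a weighted harmonic-versus-arithmetic comparison of the two contributions, I expect it to follow from elementary bounds rather than any new estimate, so the main obstacle is really just bookkeeping of the two cancelling prefactors rather than any analytic difficulty.
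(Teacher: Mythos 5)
Your proof is correct, but it takes a different route from the paper's. You compute the limit directly from the pre-limit stationary second moments: the covariance trichotomy of Lemma~\ref{lem:gedrag} and the variance asymptotics $\Var M_i^{(N)}\sim N\E\Lambda/\mu_i + N^{2-\alpha}\Delta\Var\Lambda/(2\mu_i)$ stated in Section~3.1 (the multi-queue analogue of (\ref{dich})), then cancel the common $N$-powers regime by regime. The paper instead deduces the corollary from Thm.~\ref{FCLT}: it lets $t\to\infty$ in the limiting covariance functions $C_{ik}(t)$ of (\ref{cov})--(\ref{covv}) and reads off
\[
c_{ik}(\alpha)= \frac{\E \Lambda\,1_{\{\alpha \geqslant1\}}+\Delta \Var(\Lambda)\,1_{\{\alpha \leqslant1\}}}{\E \Lambda\,1_{\{\alpha \geqslant1\}}+\tfrac12 \Delta \Var(\Lambda)\,1_{\{\alpha \leqslant1\}}}.
\]
Each approach has a merit. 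Yours is more elementary (no weak-convergence machinery) and, arguably, more faithful to the statement, which concerns the pre-limit quantity ${\mathbb C}{\rm orr}(M_i^{(N)},M_k^{(N)})$; the paper's proof implicitly identifies $\lim_N$ of this correlation with the $t\to\infty$ limit of the correlation of the FCLT limit process, an interchange of limits it does not spell out. The paper's route, on the other hand, is a two-line consequence of a theorem already proved and ties the constant to the structure of the limiting Ornstein--Uhlenbeck process. One small improvement to your write-up: in the intermediate regime you can make the constant explicit, $c_{ik}(1)=(\E\Lambda+\Delta\Var\Lambda)/(\E\Lambda+\tfrac12\Delta\Var\Lambda)$, and the membership in $[1,2]$ is then immediate from $x+\tfrac12 y\leqslant x+y\leqslant 2\bigl(x+\tfrac12 y\bigr)$ for $x,y\geqslant 0$; no convexity argument is needed.
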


\begin{proof}
From Thm.\ \ref{FCLT}, as $t\to\infty$,
\[
C_{ik}(t)\to 
1_{\{\alpha \geqslant1\}}{\displaystyle \frac{\mathbb{E}\Lambda}{\mu_i+\mu_k1_{\{i\not=k\}}} }
+
1_{\{\alpha \leqslant1\}}{\displaystyle \frac{\Delta {\mathbb V}{\rm ar}(\Lambda)}{\mu_i+\mu_k}}
.\]
We observe that (\ref{corrr}) holds, with
\[c_{ik}(\alpha)= \frac{
\E \Lambda\,1_{\{\alpha \geqslant1\}}
+\Delta {\mathbb V}{\rm ar}(\Lambda)\,1_{\{\alpha \leqslant1\}}
}{
\E \Lambda\,1_{\{\alpha \geqslant1\}}
+\frac12 \Delta {\mathbb V}{\rm ar}(\Lambda)\,1_{\{\alpha \leqslant1\}}
}.\]
\end{proof}

\section{Large deviations}\label{sec:LD}
Where the previous section studied the random-environment  infinite-server system under the {\it central limit scaling}, we now focus on the {\it large deviations domain}. 
As it turns out, the previously observed trichotomy remains valid.
The results again translate to the setting with $d$ coupled queues; for ease we first present (and prove) the results for $d=1$, to return to the coupled model at the end of the section.

\subsection{Univariate large deviations}
Let the arrival rate of the $N$-scaled model again be given by $N \Lambda^{(N)}(t)$ (see (\ref{defslab})). 
An important quantity in our analysis is
\[
\kappa_t\big(\Lambda^{(N)}\big)=\int_0^{t}\Lambda^{(N)}(s) \e^{-\mu s}\, \D s =
\frac{1}{\mu}(1-{\e}^{-\mu\Delta N^{-\alpha}})\sum_{j=0}^{\lfloor t/(\Delta N^{-\alpha}) \rfloor - 1}\Lambda_j {\e}^{-\mu j \Delta N^{-\alpha}}  + o_p(1),
\]
as $N \to \infty$.
As observed earlier,  $M^{(N)}(t)$ is a mixed Poisson random variable, with random parameter distributed as $N \kappa_t(\Lambda^{(N)})$.
In the large deviations setting we are interested in the tail probabilities of $M^{(N)}(t)$ for given $t$ and $N$ large.
More specifically, our objective is to evaluate the decay rate
\[\lim_{N\to\infty} N^{-\beta}\log{\mathbb P}\big(M^{(N)}(t)/N \geqslant  a\big),\]
for any $a>\rho(t)= \rho(1-{\e}^{-\mu t})$ (where $\rho:=
\lambda/\mu$) and some specific $\beta>0$.
Given the results obtained in the central limit regime, we expect that $\beta = \min\{1,\alpha\}$.

The main idea behind our analysis is to condition on the value of the random Poisson parameter. In self-evident notation,
\begin{eqnarray}\nonumber
{\mathbb P}\big(M^{(N)}(t)/N \geqslant  a\big)&=&\PR({\rm Pois}(N \kappa_t(\Lambda^{(N)})    \geqslant   N a)\\&=&\int_0^{\infty}  \PR( {\rm Pois}(Nx)    \geqslant   N a) \PR (\kappa_t(\Lambda^{(N)}) \in \dd x).\label{eqP}
\end{eqnarray}
In some parts of the analysis we rely on the following lemma, in which we establish a large deviation result for
$
\PR(\kappa_t(\Lambda^{(N)})   \geqslant   a).
$

\begin{lemma} \label{lemP}
Let $ a > \rho(t)$. Then, with $M(\theta):=\E \,{\e}^{\theta \Lambda}$,
\begin{equation}\label{LDP}
\lim_{N \to \infty} \,{\Delta}{N^{-\alpha}} \log \PR(\kappa_t(\Lambda^{(N)})   \geqslant   a)=- \sup_{\theta > 0} \big(\theta a - \int_0^t \log M(\theta\, {\e}^{-\mu s})\, \dd s\big)
\end{equation}
\end{lemma}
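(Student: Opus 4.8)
The plan is to apply the Gärtner--Ellis theorem at speed $N^\alpha/\Delta$, after rewriting $\kappa_t(\Lambda^{(N)})$ as a weighted sum of the i.i.d.\ rates $\Lambda_j$. Write $h:=\Delta N^{-\alpha}$ for the mesh size and note that, exactly,
\[
\kappa_t(\Lambda^{(N)})=\sum_{j=0}^{\lfloor t/h\rfloor-1}\Lambda_j\int_{jh}^{(j+1)h}\e^{-\mu s}\,\dd s+\Lambda_{\lfloor t/h\rfloor}\int_{\lfloor t/h\rfloor h}^{t}\e^{-\mu s}\,\dd s,
\]
where each full-slot integral equals $c_N\,\e^{-\mu jh}$ with $c_N:=\mu^{-1}(1-\e^{-\mu h})$, while the final boundary term is bounded by $h\,\Lambda_{\lfloor t/h\rfloor}$ and will turn out to be negligible on the large deviations scale. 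This reproduces the representation quoted above the lemma, with the $o_p(1)$ remainder made explicit.

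Next I would compute the scaled cumulant generating function. By independence of the $\Lambda_j$,
\[
\psi_N(\theta):=h\,\log\E\exp\!\Big(\tfrac{\theta}{h}\,\kappa_t(\Lambda^{(N)})\Big)= h\sum_{j=0}^{\lfloor t/h\rfloor-1}\log M\!\Big(\theta\,\tfrac{c_N}{h}\,\e^{-\mu jh}\Big)+o(1),
\]
the $o(1)$ accounting for the single boundary term (whose contribution is $h\log M(O(\theta))\to 0$). Since $c_N/h\to 1$ and, after the substitution $s=jh$, the sum is a Riemann sum, I would show
\[
\psi_N(\theta)\longrightarrow \psi(\theta):=\int_0^t\log M(\theta\,\e^{-\mu s})\,\dd s\qquad(N\to\infty),
\]
for every $\theta$ with $M(\theta)<\infty$; here one uses that $\theta\e^{-\mu s}\leqslant\theta$ on $[0,t]$, so finiteness of $M(\theta)$ guarantees finiteness of the integrand throughout, together with uniform continuity of $\log M$ on compact subsets of the interior of its domain to absorb the discrepancy $c_N/h-1$.

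The core of the argument is then to verify the hypotheses of Gärtner--Ellis for $\psi$. One checks $\psi(0)=0$ and that $\psi$ is differentiable with $\psi'(\theta)=\int_0^t \e^{-\mu s}\,M'(\theta\e^{-\mu s})/M(\theta\e^{-\mu s})\,\dd s$ on the interior of $\{\theta:M(\theta)<\infty\}$ (an interval containing $0$, since $M$ is finite in a neighbourhood of the origin); in particular $\psi'(0)=\E\Lambda\,(1-\e^{-\mu t})/\mu=\rho(t)$, the fluid value. Provided $\psi$ is essentially smooth (steep at the boundary of its domain), Gärtner--Ellis yields the full LDP for $\kappa_t(\Lambda^{(N)})$ at speed $1/h=N^\alpha/\Delta$ with good rate function $\psi^*(a)=\sup_\theta(\theta a-\psi(\theta))$. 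Since $a>\rho(t)=\psi'(0)$ lies strictly to the right of the mean, the supremum is attained at some $\theta>0$, so that $\psi^*(a)=\sup_{\theta>0}(\theta a-\psi(\theta))$, and the closed upper tail $\{\kappa_t(\Lambda^{(N)})\geqslant a\}$ inherits the rate $-\psi^*(a)$, which is exactly (\ref{LDP}).

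I expect the steepness verification to be the main obstacle: if the law of $\Lambda$ has a moment generating function that is finite only up to a critical $\theta^\star<\infty$, one must confirm that $\psi'(\theta)\to\infty$ as $\theta\uparrow\theta^\star$ in order to invoke the Gärtner--Ellis lower bound without truncation. A secondary technicality is making the $o_p(1)$ boundary term and the factor $c_N/h-1$ rigorously negligible at the exponential scale; for the upper (Chernoff) bound this is immediate, while for the lower bound an exponential-equivalence argument suffices. Should essential smoothness fail, I would fall back on proving the upper tail directly: the Chernoff bound gives $h\log\PR(\kappa_t(\Lambda^{(N)})\geqslant a)\leqslant -\theta a+\psi_N(\theta)\to -(\theta a-\psi(\theta))$ for each $\theta>0$, and a complementary change of measure (exponentially tilting the $\Lambda_j$ at the optimizing $\theta$) produces the matching lower bound.
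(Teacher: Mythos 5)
Your proposal is correct, and your ``fallback'' plan is in fact precisely the paper's proof; only your primary route (G\"artner--Ellis) differs in packaging. The paper never invokes G\"artner--Ellis: it introduces the same proxy sum $k_t(\Lambda^{(N)})=\Delta N^{-\alpha}\sum_{j}\Lambda_j\e^{-\mu j\Delta N^{-\alpha}}$, proves the upper bound exactly as in your Chernoff computation (Markov's inequality for each $\theta>0$, then the Riemann-sum limit and an infimum over $\theta$), and then obtains the matching lower bound by exponentially tilting the law of each $\Lambda_j$ at the optimizer $\theta^\star$ of the variational problem; under the tilted measure $\Q$ it checks $\E_{\Q}\,k_t(\Lambda^{(N)})\to a$ and $\Var_{\Q}(k_t(\Lambda^{(N)}))\to 0$ (indeed asymptotic normality at scale $N^{\alpha/2}$, proved by the cumulant method of Thm.\ \ref{CLT}), so that $\Q\big(k_t(\Lambda^{(N)})\in[a,a+\varepsilon)\big)\to\tfrac12$, which yields the lower bound after letting $\varepsilon\downarrow 0$. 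The discrepancy between $\kappa_t$ and $k_t$ is then absorbed by an $\eta$-sandwich combined with continuity of the rate function in $a$ --- playing the same role as your exponential-equivalence step. Two remarks on what each approach buys. First, the steepness issue you rightly flag as the main obstacle for G\"artner--Ellis is not actually avoided by the paper: the paper simply asserts that the infimum is attained at the unique solution $\theta^\star$ of the first-order condition, which presupposes exactly the essential-smoothness (and finiteness of $M$ near $\theta^\star$) that you would have to verify; so on this point the two routes are equally (in)complete, and both implicitly lean on the standing assumption that the transform of $\Lambda$ is well behaved. Second, the hands-on tilting argument is self-contained and makes the lower bound mechanism explicit (concentration of $k_t$ under $\Q$), whereas your G\"artner--Ellis route is shorter once the hypotheses are granted but delegates precisely the delicate step to the theorem's hypotheses. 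Either way, your write-up would constitute a valid proof of Lemma \ref{lemP}.
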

\begin{proof}
As a first step, we define a proxy for $\kappa_t(\Lambda^{(N)})$ that is easier to work with:
\begin{equation}\label{k}
k_t\big(\Lambda^{(N)}\big):= \Delta N^{-\alpha}\sum_{j=0}^{\lfloor t/(\Delta N^{-\alpha})\rfloor-1}\Lambda_j {\e}^{-\mu j \Delta N^{-\alpha}};
\end{equation}
later we show that $\kappa_t(\Lambda^{(N)})$ and $k_t (\Lambda^{(N)})$ are `close enough'.
Let $P_N(a):=\PR(k_t(\Lambda^{(N)})   \geqslant   a)$.
Writing, for arbitrary $\theta>0$,
\[
P_N(a)=\PR\big({\e}^{\theta k_t(\Lambda^{(N)})/(\Delta N^{-\alpha})}    \geqslant   {\e}^{\theta a/(\Delta N^{-\alpha})}\big)
=\PR\big(\prod_{j=0}^{\lfloor t/(\Delta N^{-\alpha}) \rfloor -1} {\e}^{\theta\,\Lambda_j{\e}^{-\mu j\,{\Delta}{N^{-\alpha}}}}   \geqslant   {\e}^{\theta a/(\Delta N^{-\alpha})}\big),
\]
Markov's inequality immediately yields the upper bound
\[
P_N(a) \leqslant {\rm e}^{-\theta a/(\Delta N^{-\alpha})}\prod_{j=0}^{\lfloor t/(\Delta N^{-\alpha}) \rfloor -1} M(\theta {\rm e}^{-\mu j\,{\Delta}{N^{-\alpha}}}).
\]
Recognizing a Riemann sum, we thus obtain
\begin{align*}
\limsup_{N \to \infty} \Delta N^{-\alpha} \log P_N(a) & \leqslant   \limsup_{N \to \infty} \Delta N^{-\alpha}\sum_{j=0}^{\lfloor t/(\Delta N^{-\alpha})\rfloor-1}\log M(\theta {\rm e}^{-\mu j\,{\Delta}{N^{-\alpha}}})-\theta a\\
&=\int_0^t \log M(\theta {\rm e}^{-\mu s}) \, \dd s - \theta a.
\end{align*}
As the established upper bound holds for any $\theta>0$, 
\begin{equation}\label{inf}
\limsup_{N \to \infty} \,{\Delta}{N^{-\alpha}} \log  P_N(a)  \leqslant  \inf_{\theta > 0}\big(\int_0^t \log M(\theta {\rm e}^{-\mu s}) \, \dd s- \theta a\big).
\end{equation}

The next goal is to prove that the above upper bound is tight. 
We do so by first noting that, due to the convexity of the function involved, the infimum in the right-hand side of (\ref{inf}) is attained by  $\theta^\star$, being the unique solution to 
\[
\left.\frac{\partial}{\partial \theta} \int_0^t \log M(\theta {\rm e}^{-\mu s}) \, \dd s \,\right|_{\theta=\theta^\star}= a.
\]
Now we apply a change-of-measure technique.
Define a measure $\Q$ by exponential twisting; the density of the $\Lambda_j$ is changed into
\[
\Q(\Lambda_j \in \dd x) :=  \frac{{\rm e}^{\theta^\star {\rm e}^{- \mu j\,{\Delta}{N^{-\alpha}}}x}}{M(\theta^\star {\rm e}^{-\mu j\,{\Delta}{N^{-\alpha}}})}\PR( \Lambda_j \in \dd x).
\]
Fix an $\varepsilon > 0$, and let the event ${\mathcal E}^{(N)}_{a} :=\{k_t(\Lambda^{(N)}) \in [a,a+\varepsilon)\}$. 
Then
\begin{eqnarray*}
P_N(a)&= &\E_{\Q}\big[
1_{{\mathcal E}^{(N)}_{a}}
\prod_{j=0}^{\lfloor t/(\Delta N^{-\alpha}) \rfloor-1}M(\theta^\star {\e}^{-\mu j\,{\Delta}{N^{-\alpha}}})
\,
{\e}^{-\theta^\star \Lambda_j{\e}^{- \mu j\,{\Delta}{N^{-\alpha}}}}\big]\\
&   \geqslant  & \Q\big(k_t(\Lambda^{(N)}) \in [a,a+\varepsilon)\big) \, {\e}^{-\theta^\star (a+\varepsilon)/(\Delta N^{-\alpha})}\, \prod_{j=0}^{\lfloor t/(\Delta N^{-\alpha}) \rfloor-1}M(\theta^\star {\e}^{-\mu j\,{\Delta}{N^{-\alpha}}}).
\end{eqnarray*}
To obtain that $\Q(k_t(\Lambda^{(N)})\in [a,a+\varepsilon)) \to \frac12$ as $N \to \infty$, we now show that $k_t(\Lambda^{(N)})$ is asymptotically normal.
It is verified  that $\E_{\Q} k_t(\Lambda^{(N)})\to a$ as ${N \to \infty}$, due to the specific construction of the measure $\Q$.
Also,
\begin{eqnarray*}
\lim_{N\to\infty}{\Var_{\Q}(k_t(\Lambda^{(N)}))} \hspace*{-5pt}
&=& \hspace*{-5pt}
\lim_{N\to\infty} \Var_{\Q}\big(\Delta N^{-\alpha}\sum_{j=0}^{\lfloor t/(\Delta N^{-\alpha})\rfloor-1}\Lambda_j \,{\rm e}^{-\mu j \Delta N^{-\alpha}}\big)\\
 \hspace*{-5pt}&=& \hspace*{-5pt}
\lim_{N\to\infty}\Delta N^{-\alpha}\big(\Delta N^{-\alpha} \sum_{j=0}^{\lfloor t/(\Delta N^{-\alpha}) \rfloor-1}  \,{\rm e}^{-2\mu j\Delta N^{-\alpha}}\big){\Var_{\Q}(\Lambda)}\\
\hspace*{-5pt}&=& \hspace*{-5pt}
\lim_{N\to\infty}\Delta N^{-\alpha} \int_0^t {\rm e}^{-2\mu s} {\rm d}s \, {\Var_{\Q}(\Lambda)}=0.
\end{eqnarray*}
Copying the approach -- using cumulant generating functions -- underlying the proof of Theorem \ref{CLT}, it is readily derived that
\[N^{\frac{\alpha}{2}}(k_t(\Lambda^{(N)}) - a) \overset{\mathrm{d}}{\rightarrow} \mathcal{N}(0,\sigma^2),\:\:\:\:
\mbox{where}\:\:\:\sigma^2 := \Delta / (2\mu) (1-{\mathrm e}^{-2\mu t}){\Var_{\Q}(\Lambda)}.\]
Hence, 
\begin{eqnarray*} 
\liminf_{N \to \infty} \,{\Delta}{N^{-\alpha}} \log \PR_N(a) &   \geqslant  & \liminf_{N\to \infty} \,{\Delta}{N^{-\alpha}} \log \Q\big(k_t(\Lambda^{(N)}) \in [a,a+ \varepsilon)\big)\\
&& \qquad -\: \theta^\star (a+\varepsilon)+\int_0^t \log M(\theta^\star {\rm e}^{-\mu s}) \, \dd s  \\
&   \geqslant  & \int_0^t \log M(\theta^\star {\rm e}^{-\mu s}) \, \dd s - \theta^\star (a+\varepsilon).
\end{eqnarray*}
By letting $\varepsilon\downarrow 0$, together with the upper bound this leads to
\begin{equation}
\lim_{N \to \infty} \,{\Delta}{N^{-\alpha}} \log P_N(a)  = -\sup_{\theta > 0}\big(\theta a - \int_0^t \log M(\theta {\rm e}^{-\mu s}) \, \dd s \big).
\end{equation}

Now it remains to show that $k_t(\Lambda^{(N)})$ can again be replaced by $\kappa_t(\Lambda^{(N)})$ (which we abbreviate for compactness to $k_t$ and $\kappa_t$). 
Note that, as $N \to \infty$,
\begin{align*}
|\kappa_t - k_t|=|\big(\frac{1-{\rm e}^{-\mu \Delta N^{-\alpha}}}{\mu \Delta N^{-\alpha}} - 1\big)\Delta N^{-\alpha}\sum_{j=0}^{\lfloor t/(\Delta N^{-\alpha}) \rfloor - 1}\Lambda_j {\e}^{-\mu j \Delta N^{-\alpha}} + o_p(1)|=o_p(1).
\end{align*}
Let $\eta>0$ small enough to guarantee $a - \eta > \rho(t)$.
Then $\PR(\kappa_t \in (k_t - \eta, k_t + \eta) ) \to 1$ as $N \to \infty$, hence
\begin{align}\nonumber
\lim_{N \to \infty} \,{\Delta}{N^{-\alpha}} \log \PR_N(a+\eta) & \leqslant  \lim_{N \to \infty} \,{\Delta}{N^{-\alpha}} \log \PR(\kappa_t \geqslant   a) \leqslant  \lim_{N \to \infty} \,{\Delta}{N^{-\alpha}} \log \PR_N(a- \eta),
\end{align}
which provides bounds for the decay rate of interest of the form 
\begin{equation}\label{rate}
 -\sup_{\theta > 0}\big(\int_0^t \log M {\rm e}^{-\mu s}) \, \dd s- \theta (a \pm \eta)\big).
\end{equation}
The rate function in (\ref{rate}) is continuous in $\eta$, so now letting $\eta\downarrow 0$ yields (\ref{LDP}).
\end{proof}

As in the central limit regime, we distinguish between the cases $\alpha>1$, $\alpha =1$, and $\alpha<1$. 
For all three cases we derive the logarithmic asymptotics.
\vspace{3mm}

\noindent {\it 1. Fast regime} ($\alpha>1$).
We can bound (\ref{eqP}) from below by
\begin{align}\label{under}
&\PR( {\rm Pois}(N (\rho(t)-\varepsilon))    \geqslant   N a) \cdot \PR(\kappa_t(\Lambda^{(N)})\geqslant\rho(t)-\varepsilon),
\end{align}
for some $\varepsilon\in (0, a - \rho(t))$.
As $N$ tends to infinity, it is directly shown that the second factor in (\ref{under}) converges to $1$, and hence has exponential decay rate 0. 
Now an application of Cram\'er's theorem \cite{DZ98} yields
\begin{align*}
\liminf_{N \to \infty} N^{-1} &\log \PR({\rm Pois}(N \, \kappa_t(\Lambda^{(N)}))    \geqslant   N a) \\
&   \geqslant   \lim_{N \to \infty} N^{-1} \log\PR( {\rm Pois}(N (\rho(t)-\varepsilon))    \geqslant   N a)\\
&=-\sup_{\theta} \big( \theta a -  (\rho(t)-\varepsilon)({\rm e}^{\theta}-1)\big)\\
&=a\log\big(\frac{\rho(t)-\varepsilon}{a}\big) -(\rho(t)-\varepsilon)+a.
\end{align*}
On the other hand, (\ref{eqP}) is majorized by
\begin{align}\label{upper}
\PR( {\rm Pois}(N (\rho(t)+\varepsilon))\geqslant N a) + \PR(\kappa_t(\Lambda^{(N)})\geqslant \rho(t)+ \varepsilon). 
\end{align}
By Cram\'er's theorem, the first term in (\ref{upper}) decays exponentially in $N$.
As a consequence of Lemma \ref{lemP}, the second term decays exponentially in  $N^{\alpha}$, i.e., superexponentially in $N$. 
This yields
\begin{align*}
\limsup_{N \to \infty} N^{-1} \log \PR({\rm Pois}(N \kappa_t(\Lambda^{(N)})    \geqslant   N a)& \leqslant  \lim_{N \to \infty} N^{-1} \log\PR( {\rm Pois}(N (\rho(t)+ \varepsilon))    \geqslant   N a)\\
&=a\log\big(\frac{\rho(t)+ \varepsilon}{a}\big) -(\rho(t)+ \varepsilon)+a.
\end{align*}
As this holds for all $\varepsilon > 0$, we conclude that 
\[
\lim_{N\to\infty}N^{-1}\log{\mathbb P}\big(M_N(t)/N \geqslant  a\big)=a\log\big(\frac{\rho(t)}{a}\big) -\rho(t)+a.
\]
Recognizing the decay rate of a Poisson distribution with mean $\rho(t)$, we observe that the essential behavior in the fast regime is again of M/M/$\infty$ type.
\vspace{3mm}

\noindent {\it 2. Slow regime} ($\alpha<1$). 
In this regime we need to distinguish between the situation in which the random variable $\Lambda$ almost surely results in a $\kappa_t(\Lambda^{(N)})$ below $a$, and the situation in which this is not the case.
The proof of the following lemma is straightforward hence omitted.
\begin{lemma}
Given $\Lambda$, let $y= \inf\{x > 0:\PR(\Lambda  \leqslant  x)=1\}$ and $u(t)=y/\mu\,(1-{\sc e}^{-\mu t})$.
Then, as $N \to \infty$,
\[\PR(\kappa_t(\Lambda^{(N)})  \leqslant  u(t)) \to 1.\]
\end{lemma}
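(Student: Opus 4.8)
The plan is to exploit that $y$ is the essential supremum of $\Lambda$, so that the piecewise-constant rate $\Lambda^{(N)}(\cdot)$ is bounded by $y$ almost surely on all of $[0,t]$, and then simply to integrate this pathwise bound against $\e^{-\mu s}$. Throughout I assume $y<\infty$, the case $y=\infty$ making $u(t)=\infty$ and the statement vacuous.

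First I would establish that $\Lambda\leqslant y$ almost surely. By right-continuity of the distribution function of $\Lambda$, the infimum defining $y$ is attained, so $\PR(\Lambda\leqslant y)=1$. Since the $\Lambda_j$ are i.i.d.\ copies of $\Lambda$ and there are only countably many of them, a countable union bound gives $\PR(\Lambda_j\leqslant y\text{ for all }j)=1$. On this almost sure event, the definition (\ref{defslab}) of $\Lambda^{(N)}$ as a piecewise-constant process taking the values $\Lambda_j$ forces $\Lambda^{(N)}(s)\leqslant y$ for every $s\geqslant 0$ and every $N$.

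Next I would integrate the pathwise bound. On the same event,
\[
\kappa_t(\Lambda^{(N)})=\int_0^t \Lambda^{(N)}(s)\,\e^{-\mu s}\,\dd s\leqslant y\int_0^t \e^{-\mu s}\,\dd s=\frac{y}{\mu}\big(1-\e^{-\mu t}\big)=u(t).
\]
Hence $\kappa_t(\Lambda^{(N)})\leqslant u(t)$ holds almost surely for each fixed $N$, which immediately yields $\PR(\kappa_t(\Lambda^{(N)})\leqslant u(t))=1$ and a fortiori the stated convergence to $1$. If one instead wishes to argue through the discrete proxy $k_t(\Lambda^{(N)})$ of (\ref{k}), the same bound $\Lambda_j\leqslant y$ gives $k_t(\Lambda^{(N)})\leqslant y\,\Delta N^{-\alpha}\sum_j \e^{-\mu j\Delta N^{-\alpha}}$, whose right-hand side is $y$ times a Riemann sum converging to $y\int_0^t\e^{-\mu s}\,\dd s=u(t)$; combining this with $|\kappa_t-k_t|=o_p(1)$ recovers the claim as a genuine $N\to\infty$ limit.

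The main (minor) obstacle will be purely measure-theoretic: verifying that $y$ is an almost-sure upper bound (the right-continuity argument ensuring the infimum is attained) and handling the growing number of slots $\Lambda_j$ simultaneously via the countable union. Neither step presents a real difficulty, which is why the result is genuinely routine; the substance lies entirely in recognizing that the essential boundedness of $\Lambda$ transfers directly to a deterministic ceiling $u(t)$ on the discounted integral $\kappa_t$.
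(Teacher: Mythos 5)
Your proof is correct, and in fact establishes the stronger statement that $\PR(\kappa_t(\Lambda^{(N)})\leqslant u(t))=1$ for every $N$, since the essential-supremum bound $\Lambda_j\leqslant y$ a.s.\ (via right-continuity of the distribution function and a countable union over $j$) yields the pathwise estimate $\kappa_t(\Lambda^{(N)})\leqslant y\int_0^t \e^{-\mu s}\,\dd s = u(t)$ almost surely. The paper omits the proof as straightforward, and your argument is precisely the straightforward one intended, so there is nothing to reconcile.
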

The cases $u(t) \geqslant a$ and $u(t) <  a$ should be treated  differently, as follows from the following intuitive explanation that is based on the decomposition (\ref{eqP}). 
If $u(t) \geqslant a$, then the random variable $\Lambda$ can be `large' with respect to $a$, which enables $M^{(N)}(t)$ to reach $Na$ without the Poisson random variable attaining an unlikely value.
If on the contrary $u(t) < a$, then $\Lambda$ is `small' with respect to $a$; $M^{(N)}(t)$ can only exceed level $Na$ by the Poisson random variable attaining an extraordinarily large value.

We first consider the case   $u(t) < a$.
For ease we assume that $\Lambda$ attains values in a discrete set of positive values, of which $y$ is the largest (occurs with probability $p \in (0,1)$) and $y'<y$ the one-but-largest.
It is directly seen that, for $\theta>0$,
\[ \E {\rm e}^{\theta M^{(N)}(t)}  \geqslant  p ^{\lceil t/(\Delta N^{-\alpha}) \rceil}  \,
\E \exp\big(\theta \,{\rm Pois}\big(N u(t)\big)\big).\]
As $\alpha < 1$, this leads to
\begin{equation}\label{slowre} \lim_{N \to \infty} N^{-1} \log \E {\rm e}^{\theta M^{(N)}(t)} \geqslant  u(t)({\rm e}^\theta-1).\end{equation}
In addition, $\E {\rm e}^{\theta M^{(N)}(t)}$ is majorized by 
\begin{align*}
&p\,\E \exp\big(\theta\, {\rm Pois}(Nu(t))\big)+ (1-p)\E \exp\big(\theta\, {\rm Pois}(N (y'/ \mu)(1-\e^{-\mu t}))\big)\\
&=p \exp\big(Nu(t)({\rm e}^{\theta}-1)\big)+ (1-p) \exp\big(
N \,(y'/\mu)(1-\e^{-\mu t})({\rm e}^{\theta}-1)
\big),
\end{align*}
which converges to the right-hand side of (\ref{slowre}) on an exponential scale (use $y>y'$). 
Applying `Cram\'er', we thus find that the probability of interest decays exponentially:
\[
\lim_{N \to \infty}  N^{-1} \log{\PR\big(M^{(N)}(t)/N    \geqslant   a\big)}=-\sup_{\theta>0} \big( \theta a - u(t)({\rm e}^\theta-1)\big)=a\log\big(\frac{u(t)}{a}\big) +a-u(t).
\]

Now we focus on  $u(t) \geqslant a$; 
in this case
\begin{align}\label{lowerr}
\PR( {\rm Pois}(N a)    \geqslant   N a) \PR (\kappa_t(\Lambda^{(N)}) \geqslant a) 
\end{align}
gives an asymptotically non-trivial lower bound for (\ref{eqP}).
Note that for every $\delta > 0$, there is an $N$ large enough such that
\begin{align*}
\PR( {\rm Pois}(N a)    \geqslant   N a) \geqslant   \big(\frac12 - \delta\big),
\end{align*}
so the first factor in (\ref{lowerr}) will not contribute to the decay rate.
The tail behavior of the second factor follows from Lemma \ref{lemP}.
On the other hand, (\ref{eqP}) is majorized by
\begin{align}\label{upperr}
\PR( {\rm Pois}(N (a-\varepsilon))    \geqslant   N a) + \PR(\kappa_t(\Lambda^{(N)})\geqslant a - \varepsilon).
\end{align} 
Again it is observed that only the second term in (\ref{upperr}) contributes to the decay rate: by `Cram\'er' the first term in (\ref{upperr}) decays exponentially, whereas 
the decay of the second term is subexponential (by Lemma \ref{lemP}) for $\varepsilon > 0$ small enough (we need $a- \varepsilon > \rho (t)$).
Letting $\varepsilon \downarrow 0$ while using that the rate function in (\ref{LDP}) is continuous in $a$, we arrive at
\[
\lim_{N \to \infty} \Delta N^{-\alpha} \log{\PR\big(M^{(N)}(t)/N  \geqslant   a\big)}=- \sup_{\theta > 0}\big(\theta a - \int_0^t \log M(\theta {\sc e}^{-\mu s})\, \dd s\big).
\]
Note that the decay rate in this fast regime depends on more detailed information on the distribution of $\Lambda$ than just the mean.
\vspace{3mm}

\noindent {\it 3. Intermediate regime} ($\alpha=1$). 
In this regime we expect exponential decay.
Indeed, it is directly derived that 
\[\lim_{N \to \infty} \Delta N^{-1} \log \E {\rm e}^{\theta M^{(N)}(t)}
= \int_{0}^t \log M(\Delta ({\rm e}^{\theta}-1) {\rm e}^{-\mu s})\, \mathrm{d}s,\]
and hence `G\"{a}rtner-Ellis' \cite{DZ98} gives 
\begin{align*}
\lim_{N \to \infty} \Delta N^{-1} \log{\PR\big(M^{(N)}(t)/N \geqslant   a\big)}&=-\sup_{\theta>0} \big( \theta a - \int_{0}^t \log M(\Delta ({\rm e}^{\theta/ \Delta}-1){\rm e}^{-\mu s})\, \mathrm{d}s
\big).
\end{align*}
For deterministic $\Lambda$ the above result would equal that of the fast regime; the resemblance with the slow regime on the other hand becomes more pronounced for larger values of $\Delta$.

\subsection{Large deviations for the coupled model}
We conclude this section by pointing out how the large devations for the coupled model (where each arrival generates work in $d$ queues) can be dealt with. 
For $\alpha   \geqslant   1$ we are in the regime of exponential decay.  
The multivariate version of the G\"artner-Ellis theorem entails that, modulo the validity of mild regularity conditions to be imposed on the set $A\subset {\mathbb R}_+^d$,
\begin{align*}
&\lim_{N\to\infty} N^{-1} \log {\mathbb P}\big({\boldsymbol M}^{(N)}(t)/N\in A\big)
=-\inf_{{\boldsymbol a}\in A} \sup_{{\boldsymbol\theta}}\big(\sum_{i=1}^d \theta_ia_i-\lim_{N\to\infty}\frac{1}{N}\log {\mathbb E}\exp\big[\sum_{i=1}^d \theta_iM_i^{(N)}(t)\big]\big).
\end{align*}
The problem therefore reduces to characterizing the limiting log moment generating function. It takes standard computations to verify that for $\alpha>1$, with an argumentation borrowed from specific intermediate results in the proof of Lemma \ref{lem:gedrag},
\[\lim_{N\to\infty}N^{-1}\log {\mathbb E}\exp\big[\sum_{i=1}^d \theta_iM_i^{(N)}(t)\big]=
t\,{\mathbb E}\Lambda\big( \int_0^t\frac{1}{t} \prod_{i=1}^d \big(e^{-\mu_i s}(e^{\theta_i}-1) + 1\big){\rm d}s-1\big),\]
whereas for $\alpha=1$ it turns out to equal
\[\frac{1}{\Delta}\int_0^t \log \E \exp\big[\Lambda \Delta\big(\big( \prod_{i=1}^d e^{-\mu_i s}(e^{\theta_i}-1)+1\big)-1\big)\big]{\rm d}s.\]
For $\alpha<1$, as before, the decay is either exponential in $N$ (if the the multi-dimensional random Poisson parameter cannot attain values that are contained in $A$), or exponential in $N^\alpha$. The latter regime being the more complicated one, we here include the corresponding decay rate.  The probability of our interest can be rewritten as 
\begin{equation}\label{eqP2}
\int_{x_1=0}^\infty \cdots \int_{x_d= 0}^\infty F_A({\boldsymbol x}) \cdot \pi(\dd x_1,\ldots, \dd x_d),
\end{equation}
where
\begin{align*}
F_A({\boldsymbol x})&:=\PR\big(\big({\rm Pois}_1(N \, \kappa_t(\Lambda^{(N)})),\ldots,{\rm Pois}_d(N \,\kappa_t(\Lambda^{(N)}))/N \big)  \in A\big),\\
\pi(\dd x_1,\ldots, \dd x_d)&:=\PR (\kappa_{t,1}(\Lambda^{(N)}) \in \dd x_1,\ldots,\kappa_{t,d}(\Lambda^{(N)}) \in \dd x_d);
\end{align*}
here the $d$ Poisson random variables are independent. 
Using the same ideas as above, it can be shown that (\ref{eqP2}) decays exponentially in $N^\alpha$, where the decay rate is now given by
\begin{eqnarray*}
\lim_{N\to\infty} \Delta N^{-\alpha}\log \PR \big(\big(\kappa_{t,1},\ldots,\kappa_{t,d}\big)(\Lambda^{(N)}) \in A\big)
= -\inf_{{\boldsymbol a}\in A} \sup_{{\boldsymbol\theta}}\big( \sum_{i=1}^d\theta_ia_i
-\int_0^t \log M\big(\sum_{i=1}^d\theta_ie^{-\mu_i s}\big){\rm d}s\big).
\end{eqnarray*}

\section{Discussion and future research} 
 
In this paper we propose to model an overdispersed arrival process by a mixed Poisson process in a random environment. 
We assess the impact of overdispersion on system performance when feeding such an arrival process into an infinite-server system. 
Under a specific scaling, we derive (functional) central limit results and large deviations asymptotics.

Various extensions can be explored, a few of which are mentioned here. 
To start with, many results seem to carry over to the setting with generally distributed service times. 
In addition, systematically studying the effect of adding a deterministic trend $\bar{\lambda}(\cdot)$ to the random environment $\Lambda(\cdot)$, the results could be generalized to a setting with nonstationary Cox arrival processes.  
Another challenge lies in refining the logarithmic asymptotics, as obtained in Section \ref{sec:LD}, to exact asymptotics.

In all of the results obtained, we revealed a trichotomy in system behavior depending on the imposed scaling on system size and sampling frequency.
Here the scaling primarily serves to change the level of overdispersion in the system. 
The combination of tunable sampling and tunable overdispersion provides a rich framework for modeling real-world arrival processes.  
One could imagine that in a rapidly changing environment, the inherent overdispersion of the arrival process hardly plays a role, whereas in a slowly changing random environment, overdispersion is expected to be more dominant. 
This interplay between sampling frequency and overdispersion is a convenient feature of our model, which could be used to calibrate the model to real-world data. 
The latter could be a promising direction for future research, involving challenging statistical issues.
 
Another application of our model would be in the area of dimensioning service systems or staffing, and in particular square-root staffing in many-server systems. 
The general idea behind square-root staffing is as follows: a finite-server system is modeled as a system in heavy traffic, where the number of servers $s$ is large and at the same time, the system is critically loaded. 
Under Markovian assumptions this can be achieved by setting $s = \lambda + \beta\sqrt{\lambda}$ (denoting the load on the system by $\lambda$) and letting $\lambda\to\infty$ while keeping $\beta>0$ fixed. 
The system then reaches the desirable Quality-and-Efficiency-Driven (QED) regime, in which the system load approaches 100\% while the delays experienced by customers remain limited. 
In such large-scale service operations, it is natural to use an infinite-server system as a proxy to the many-server system.
Infinite-server models are extremely useful because of their tractability; this can be exploited by translating detailed knowledge of the infinite-server system state to the finite-server setting.
This returns rather good estimates of future arrivals, even in situations of time-varying arrival processes \cite{Whitt99,WGK07}. 
The model developed in this paper provides a new way of modeling such large-scale service systems, with the additional feature of a tunable level of overdispersion, essentially replacing a deterministic $\lambda$ by a stochastically fluctuating $\Lambda$. 
The possibility to design asymptotic dimensioning schemes 
compatible with our new model -- for both static and time-varying overdispersed arrival processes -- is currently investigated by the authors. 

\bibliography{BibliographyArticle1}
\bibliographystyle{plain}
\vfill
\end{document}